\newtheorem{lemma}{Lemma}
\newtheorem{thm}{Theorem}
\newtheorem{remark}{Remark}
\newtheorem{tb}{Table}
\numberwithin{equation}{section}
\begin{document}

\leftline{ \scriptsize \it  }
\title[]
{Approximation of Discontinuous Signals by Exponential Sampling Series}
\maketitle

\begin{center}
{\bf SATHISH KUMAR ANGAMUTHU $^1,$ PRASHANT KUMAR$^2$ and DEVARAJ PONNAIAN$^3$}\\

\vskip0.2in
$^{1,2}$Department of Mathematics, Visvesvaraya National Institute of Technology Nagpur, Nagpur, Maharashtra-440010, India\\
\verb"mathsatish9@gmail.com" and \verb''pranwd92@gmail.com

\vskip0.2in
$^3$School of Mathematics, Indian Institute of Science Education and Research, Thiruvananthapuram, India.\\
\verb"devarajp@iisertvm.ac.in"
\end{center}

\begin{abstract}
We analyse the behaviour of the exponential sampling series $S_{w}^{\chi}f$ at jump discontinuity of the bounded signal $f.$ We obtain a representation lemma that is used for analysing the series $S_{w}^{\chi}f$ and we establish approximation of jump discontinuity functions by the series $S_{w}^{\chi}f.$ The rate of approximation of the exponential sampling series $S_{w}^{\chi}f$ is obtained in terms of logarithmic modulus of continuity of functions and the round-off and time-jitter errors are also studied. Finally we give some graphical representation of approximation of discontinuous functions by $S_{w}^{\chi}f$ using suitable kernels.
\\
\vskip0.001in

\noindent Keywords: Exponential Sampling Series, Discontinuous Functions, Logarithmic Modulus of Smoothness, Rate of Approximation, Round-off and Time Jitter Errors
\\
\vskip0.001in
\noindent Mathematics Subject Classification(2010): 41A25, 26A15, 41A35. 
\end{abstract}

\section{Introduction and Preliminaries}
Let $\mathbb{R}^{+}$ denote the set of all positive real numbers and let $ \chi$ be a real valued function defined on $\mathbb{R}^{+}.$ For $\nu \in \mathbb{N}_{0}=\mathbb{N}\cup\{0\},$ the algebraic moments of order $\nu$ is defined by 
$$ m_{\nu}(\chi,u):= \sum_{k= - \infty}^{+\infty}  \chi(e^{-k} u) (k- \log u)^{\nu}, \hspace{0.5cm} \forall \ u \in \mathbb{R}^{+}.$$
In a similar way, we can define the absolute moment of order $\nu$ as
$$ M_{\nu}(\chi,u):= \sum_{k= - \infty}^{+\infty}  |\chi(e^{-k} u)| |k- \log u|^{\nu},  \hspace{0.5cm} \forall \ u \in \mathbb{R}^{+}.$$
We define $ \displaystyle M_{\nu}(\chi):= \sup_{u \in \mathbb{R}^{+}} M_{\nu}(\chi,u). $ We say that $ \chi$ is a kernel if it satisfies the following conditions: 
\begin{itemize}
\item[(i)] for every $ u \in \mathbb{R}{^{+}},$ $\displaystyle  \sum_{k=- \infty}^{+\infty} \chi(e^{-k}u) =1,$

\item[(ii)]for some $\nu>0,$  $\displaystyle M_{\nu}(\chi,u)=\sup_{u \in \mathbb{R}^{+}}  \sum_{k= - \infty}^{+\infty}  |\chi(e^{-k} u)| |k- \log u|^{\nu}<+\infty.$
\end{itemize}
Let $\Phi$ denote the set of all functions satisfying conditions (i) and (ii). For $ t \in \mathbb{R}^{+}, $ $\chi\in\Phi$ and $ w >0,$ the exponential sampling series for a function $ f:\mathbb{R}^{+} \rightarrow \mathbb{R} $ is defined by (\cite{bardaro7})
\begin{equation} \label{classical}
(S_{w}^{\chi}f)(t)= \sum_{k=- \infty}^{+\infty} \chi(e^{-k} t^{w}) f( e^{\frac{k}{w}}).
\end{equation}
It is easy to see that the series $S_{w}^{\chi}f$ is well defined for $f\in L^{\infty}(\mathbb{R}^{+}).$ Using the above sampling series $S_{w}^{\chi}f$ one can reconstruct the functions which are not Mellin-band limited. Recently, Bardaro et.al. \cite{bardaro3}  pointed out that the study of Mellin-band limited functions are different from that of Fourier-band limited functions. Mamedov was the first person who studied the Mellin theory in \cite{mamedeo} and then Butzer et.al.  further developed the Mellin's theory and studied its approximation properties in \cite{butzer3,butzer4,butzer5,butzer7}. The reconstruction using exponential sampling formula was first studied by Butzer and Jansche in \cite{butzer5}. The pointwise and uniform convergence of the series $S_{w}^{\chi}f$ for continuous functions was analysed in \cite{bardaro7} and the convergence of $S_{w}^{\chi}f$ was studied in Mellin-Lebesgue spaces. Recently Bardaro et.al. studied various approximation results using Mellin transform which can be seen in \cite{bardaro1,bardaro9,bardaro2,bardaro3,bardaro7}. To improve the rate of convergence, a linear combination of $S_{w}^{\chi}f$ was taken in \cite{linear}. The exponential sampling series with the sample points which are exponentially spaced on $\mathbb{R}{^{+}}$ has been obtained as solution of some mathematical model related to light scattering, Fraunhofer diffraction and radio astronomy (see \cite{bert, cas, gori, ost}). 

The approximation of discontinuous functions by classical sampling operators was first initiated by Butzer et.al.\cite{discont}. Further, the Kantorovich sampling series for discontinuous signals was analysed in \cite{Maria}. Inspired by these works we analyse the behaviour of exponential sampling series (\ref{classical}) as $w \rightarrow \infty$ for discontinuous functions at the jump discontinuities, i.e., at a point $t$ where the limits
$$ f(t+0):=\lim_{p \rightarrow 0^{+}}f(t+p),$$ and $$ f(t-0):=\lim_{p \rightarrow 0^{+}}f(t-p)$$
exists and are different. For a kernel $\chi,$ we define the functions
$$\psi_{\chi}^{+}(u):=\sum _{k<\log u}\chi(ue^{-k}),$$
and
$$ \psi_{\chi}^{-}(u):=\sum _{k>\log u}\chi(ue^{-k}).$$

We observe that $\psi_{\chi}^{+}(u)$ and $\psi_{\chi}^{-}(u)$ are recurrent functions with fundamental interval $[1, e]$. Now we shall recall the definition of Mellin transform. 
Let $ L^p(\mathbb{R}^+),$ $p\in[1,\infty)$ be the set of all Lebesgue measurable and $p$-integrable functions defined on $\mathbb{R}^+.$ For $c \in \mathbb{R}$, we define the space
$$X_c=\{f :\mathbb{R}^+\rightarrow\mathbb{C}:f(\cdot)(\cdot)^{c-1}\in L^1(\mathbb{R}^+)\}$$ equipped with the norm
$$\Vert f \Vert_{X_c}=\Vert f(\cdot)(\cdot)^{c-1} \Vert_1=\int_0^{+\infty} |f(y)|y^{c-1}dy.$$
For $f \in X_c,$ it's Mellin transform is defined by 
$$\widehat{[f]_{M}}(s):= \int_0^{+\infty} y^{s-1}f(y)\ dy \ , \,  \ (s = c + it, t \in \mathbb{R}).$$ We say that a function $f\in X_{c} \cap C(\mathbb{R}^+), c\in\mathbb{R}$ is Mellin band-limited in the interval $[-\kappa, \kappa],$ if $\widehat{[f]_{M}}(c+it)=0$ for all $|t|>\kappa ,\ \kappa\in\mathbb{R}^+.$

The paper is organized as follows. In section 2, we prove the representation lemma for the exponential sampling series (\ref{classical}) and using this lemma we analyse the approximation of discontinuous functions by $S_{w}^{\chi}f$ in Theorem \ref{t2}, Theorem \ref{t3} and Theorem \ref{t5}. Further we analyse the degree of approximation for the sampling series (\ref{classical}) in-terms of logarithmic modulus of smoothness in section 3. In section 4, we study the round-off and time jitter errors for these sampling series. 
In example section, we have given a construction of a family of Mellin-band limited kernels such that $\chi(1)=0$ for which $S_{w}^{\chi}f$ converge at any jump discontinuities. Further, the convergence at discontinuity points of the sampling series $S_w^{\chi}{f}$ has been tested numerically and numerical results are provided in Tables 1, 2 and 3. 

\section{Approximation of Discontinuous Signals}
For any given bounded function $f:\mathbb{R}^{+}\rightarrow\mathbb{R},$ we first prove the following representation lemma for the sampling series $S_{w}^{\chi}f.$ Throughout this section we assume that the right and left limits of $f$ at $t\in\mathbb{R}^{+}$ exist and are finite.

\begin{lemma}\label{lemma1}
For a given bounded function $f:\mathbb{R}^{+}\rightarrow\mathbb{R}$ and a fixed $t\in\mathbb{R}^{+},$ let $h_{t}:\mathbb{R}^{+}\rightarrow\mathbb{R}$ be defined by 
\begin{eqnarray*} \label{M3}
h_{t} (x) =
\left\{
\begin{array}{ll}
{f(x)-f(t-0),} & \mbox{if} \ x <t \\

{f(x)-f(t+0),} &\mbox{if} \  x >t\\

{0,}  &\mbox{if} \ x=t.
\end{array}
\right.
\end{eqnarray*}
Then the following holds:
\begin{eqnarray*}
  (S_{w}^{\chi}f)(t)=(S_{w}^{\chi}h_{t})(t)+f(t-0)+\psi_{\chi}^{-}(t^{w})[f(t+0)-f(t-0)]+\chi(1)[f(t)-f(t-0)],
\end{eqnarray*}
if $w\log(t)\in\mathbb{Z}$ and
\begin{eqnarray*}
(S_{w}^{\chi}f)(t)=(S_{w}^{\chi}h_{t})(t)+f(t-0)+\psi_{\chi}^{-}(t^{w})[f(t+0)-f(t-0)],
\end{eqnarray*}
 if $w\log(t)\notin\mathbb{Z}.$
\end{lemma}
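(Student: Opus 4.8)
The plan is to decompose the sampling series according to the position of the sample points $e^{k/w}$ relative to the fixed point $t$, and then rewrite $f$ in terms of $h_t$ on each piece. First I would observe that the sample point $e^{k/w}$ satisfies $e^{k/w}<t$, $=t$, or $>t$ exactly according as $k<w\log t$, $k=w\log t$, or $k>w\log t$; thus the dichotomy ``$w\log t\in\mathbb{Z}$'' versus ``$w\log t\notin\mathbb{Z}$'' is precisely the question of whether one sample point lands exactly on $t$. Since $f$ is bounded and $\chi\in\Phi$ satisfies condition (ii), both $S_w^\chi f$ and $S_w^\chi h_t$ converge absolutely, so all the rearrangements below are legitimate.

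In the generic case $w\log t\notin\mathbb{Z}$, no sample point equals $t$, so by the definition of $h_t$ we have $f(e^{k/w})=h_t(e^{k/w})+f(t-0)$ when $k<w\log t$ and $f(e^{k/w})=h_t(e^{k/w})+f(t+0)$ when $k>w\log t$. Substituting into \eqref{classical} and splitting the sum gives
$(S_w^\chi f)(t)=(S_w^\chi h_t)(t)+f(t-0)\sum_{k<w\log t}\chi(e^{-k}t^w)+f(t+0)\sum_{k>w\log t}\chi(e^{-k}t^w)$.
Writing $u=t^w$ so that $\log u=w\log t$, I would recognise the two partial sums as $\psi_\chi^+(t^w)$ and $\psi_\chi^-(t^w)$ respectively, straight from their definitions.

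The key identity is then the normalization condition (i): since no index is omitted when $w\log t\notin\mathbb{Z}$, we get $\psi_\chi^+(t^w)+\psi_\chi^-(t^w)=\sum_{k}\chi(e^{-k}t^w)=1$, hence $\psi_\chi^+(t^w)=1-\psi_\chi^-(t^w)$. Substituting this and collecting the coefficients of $f(t-0)$ and $f(t+0)$ yields the second formula, namely $(S_w^\chi f)(t)=(S_w^\chi h_t)(t)+f(t-0)+\psi_\chi^-(t^w)[f(t+0)-f(t-0)]$.

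For the integer case $w\log t=k_0\in\mathbb{Z}$, the single sample point $e^{k_0/w}=t$ contributes $\chi(e^{-k_0}t^w)f(t)=\chi(1)f(t)$, using $e^{-k_0}t^w=t^{-w}t^w=1$, while $h_t(t)=0$ removes this same index from $(S_w^\chi h_t)(t)$. Separating out this middle term and again identifying the tail sums with $\psi_\chi^\pm(t^w)$, condition (i) now reads $\psi_\chi^+(t^w)+\chi(1)+\psi_\chi^-(t^w)=1$; writing $f(t)=f(t-0)+[f(t)-f(t-0)]$ and regrouping produces the extra term $\chi(1)[f(t)-f(t-0)]$, giving the first formula. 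I expect no genuine analytic obstacle here: the only points requiring care are the careful matching of the strict index ranges $k<w\log t$ and $k>w\log t$ against the definitions of $\psi_\chi^{\pm}$, and applying the normalization (i) with or without the isolated $\chi(1)$ term depending on the case.
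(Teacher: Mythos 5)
Your proposal is correct and follows essentially the same route as the paper: split the series at the indices $k \lessgtr w\log t$ (isolating the $k=w\log t$ term, which contributes $\chi(1)f(t)$ in the integer case and vanishes in $S_w^{\chi}h_t$ since $h_t(t)=0$), identify the two tails with $\psi_{\chi}^{\pm}(t^w)$, and invoke the normalization condition (i) to eliminate $\psi_{\chi}^{+}(t^w)$. The only cosmetic difference is that the paper expands $(S_w^{\chi}h_t)(t)$ and solves for $(S_w^{\chi}f)(t)$ via an add-and-subtract of $f(t-0)\sum_{k\geq w\log t}\chi(e^{-k}t^w)$, whereas you substitute $f=h_t+f(t\mp 0)$ directly into $(S_w^{\chi}f)(t)$; these are the same computation run in opposite directions.
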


\begin{proof}
Let $w\log(t)\in \mathbb{Z},$ and $w>0.$ Then, we can write
\begin{eqnarray*}
  (S_{w}^{\chi}h_{t})(t)&=&\sum _{k<w\log(t)}\chi(e^{-k} t^{w})(f(e^{\frac{k}{w}})-f(t-0))+\sum _{k> w\log(t)}\chi(e^{-k} t^{w})(f(e^{\frac{k}{w}})-f(t+0)) \\
  &=&\sum _{k<w\log(t)}\chi(e^{-k} t^{w})f(e^{\frac{k}{w}})+\sum _{k\geq w\log(t)}\chi(e^{-k} t^{w})f(e^{\frac{k}{w}})-f(t-0)\sum _{k<w\log(t)}\chi(e^{-k} t^{w})
  \\&&
  -f(t+0)\sum _{k> w\log(t)}\chi(e^{-k} t^{w})-\chi(1)f(t) \\
  &=&(S_{w}^{\chi}f)(t)-f(t-0)\sum _{k<w\log(t)}\chi(e^{-k} t^{w})-f(t+0)\sum _{k> w\log(t)}\chi(e^{-k} t^{w})-\chi(1)f(t).
  \end{eqnarray*}
  Adding and subtracting $\displaystyle f(t-0)\sum _{k\geq w\log(t)}\chi(e^{-k} t^{w})$ in the above equation and rearranging all terms, we obtain
  \begin{eqnarray*}
  (S_{w}^{\chi}f)(t)&=&(S_{w}^{\chi}h_{t})(t)+f(t-0)\bigg(\sum _{k<w\log(t)}\chi(e^{-k} t^{w})+\sum _{k\geq w\log(t)}\chi(e^{-k} t^{w})\bigg)
  \\&&+[f(t+0)-f(t-0)]\sum _{k> w\log(t)}\chi(e^{-k} t^{w})+\chi(1)f(t)-f(t-0)\chi(1)\\
  &=&(S_{w}^{\chi}h_{t})(t)+f(t-0)\sum_{k=- \infty}^{+\infty} \chi(e^{-k} t^{w})+[f(t+0)-f(t-0)]\sum _{k> w\log(t)}\chi(e^{-k} t^{w})\\&&
  \chi(1)[f(t)-f(t-0)].
   \end{eqnarray*}
   Hence using the condition that $\displaystyle \sum_{k=- \infty}^{+\infty} \chi(e^{-k} u) =1,$ we can easily obtain
   \begin{eqnarray*}
  (S_{w}^{\chi}f)(t)&=&(S_{w}^{\chi}h_{t})(t)+f(t-0)+\psi_{\chi}^{-}(t^{w})[f(t+0)-f(t-0)]+\chi(1)[f(t)-f(t-0)].
   \end{eqnarray*}
  Now let $w\log(t)\notin \mathbb{Z}$ and $w>0.$ Then repeating the same computations, we easily obtain
 \begin{eqnarray*}
  (S_{w}^{\chi}f)(t)&=&(S_{w}^{\chi}h_{t})(t)+f(t-0)+[f(t+0)-f(t-0)]\sum _{k\geq w\log(t)}\chi(e^{-k} t^{w})\\
&=&(S_{w}^{\chi}h_{t})(t)+f(t-0)+[f(t+0)-f(t-0)]\psi_{\chi}^{-}(t^{w}).
 \end{eqnarray*}
\end{proof}

Before proving the approximation of discontinuous functions by $S_w^{\chi}{f},$ we recall the following theorem proved in (\cite{bardaro7}) for continuous functions on $\mathbb{R}^{+}.$
\begin{thm}\label{t1}
Let $ f:\mathbb{R}^{+} \rightarrow \mathbb{R} $ be a bounded function and $\chi\in\Phi.$ Then $(S_w^{\chi}{f})(t)$ converges to $f(t)$ at any point $t$ of continuity. Moreover, if $f\in C(\mathbb{R}^{+}),$ then we have 
$$ \lim_{ w \rightarrow \infty} \| f- S_w^{\chi}{f}\|_{\infty} = 0.$$
\end{thm}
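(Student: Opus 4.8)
The plan is to exploit the partition-of-unity property (i) of the kernel, which lets me rewrite the error as a single weighted sum, and then to control that sum by splitting it into a ``local'' piece governed by continuity and a ``tail'' piece governed by the moment condition (ii).

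First I would use that $\sum_{k} \chi(e^{-k} t^{w}) = 1$ for every $w > 0$, so that
$$(S_{w}^{\chi}f)(t) - f(t) = \sum_{k=-\infty}^{+\infty} \chi(e^{-k} t^{w})\big[f(e^{k/w}) - f(t)\big],$$
and hence
$$|(S_{w}^{\chi}f)(t) - f(t)| \leq \sum_{k=-\infty}^{+\infty} |\chi(e^{-k} t^{w})|\,|f(e^{k/w}) - f(t)|.$$
The natural metric here is multiplicative: the sample $e^{k/w}$ sits at logarithmic distance $|k/w - \log t| = \tfrac{1}{w}|k - w\log t|$ from $t$. Writing $u = t^{w}$, so that $\log u = w\log t$, turns each sum over $k$ into exactly the form appearing in the moment definitions, and this is the device that makes the tail estimate uniform in $t$.

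Next, fix $\varepsilon > 0$ and choose $\delta > 0$ from the continuity of $f$ at $t$, so that $|f(e^{k/w}) - f(t)| < \varepsilon$ whenever $|k/w - \log t| < \delta$. I would then split the sum at the threshold $|k - w\log t| < w\delta$. On the local part, the continuity bound together with $\sum_{k} |\chi(e^{-k} u)| \leq M_{0}(\chi)$ gives a contribution at most $\varepsilon\, M_{0}(\chi)$; the finiteness of $M_{0}(\chi)$ here is exactly what makes $S_{w}^{\chi}f$ well defined on $L^{\infty}(\mathbb{R}^{+})$, and it follows from (ii) by separating the at most two indices with $|k - \log u| < 1$ from the remaining indices, on which $|k - \log u|^{\nu} \geq 1$ forces $\sum |\chi(e^{-k}u)| \leq M_{\nu}(\chi)$. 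On the tail part, I would use boundedness of $f$ and the moment estimate
$$\sum_{|k - \log u| \geq w\delta} |\chi(e^{-k} u)| \leq \frac{1}{(w\delta)^{\nu}} \sum_{|k - \log u| \geq w\delta} |\chi(e^{-k} u)|\,|k - \log u|^{\nu} \leq \frac{M_{\nu}(\chi)}{(w\delta)^{\nu}},$$
so the tail is bounded by $2\|f\|_{\infty} M_{\nu}(\chi)(w\delta)^{-\nu}$, which tends to $0$ as $w \to \infty$. Letting $w \to \infty$ and then $\varepsilon \to 0$ yields the pointwise statement.

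For the uniform statement when $f \in C(\mathbb{R}^{+})$, I would run precisely the same argument, the only change being that $\delta$ must now come from the uniform (logarithmic) continuity of $f$, so that it is independent of $t$; since both bounds $M_{0}(\chi)$ and $M_{\nu}(\chi) = \sup_{u} M_{\nu}(\chi, u)$ are already taken as suprema over the argument $u = t^{w}$, every estimate above is automatically uniform in $t$. The main obstacle I anticipate is exactly this uniformity: making the local/tail split work simultaneously for all $t$ forces the threshold to be drawn from uniform continuity in the multiplicative sense, and it is worth isolating as a preliminary the elementary fact that a finite moment $M_{\nu}(\chi)$ of positive order controls the zeroth moment $M_{0}(\chi)$, since that is what prevents the local term from blowing up.
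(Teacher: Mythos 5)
The paper contains no proof of this theorem: it is explicitly recalled from \cite{bardaro7}, so there is nothing internal to compare against. Your argument is precisely the standard proof from that reference --- partition of unity to write the error as $\sum_{k} \chi(e^{-k}t^{w})\bigl[f(e^{k/w})-f(t)\bigr]$, a local/tail split at $|k-w\log t|=w\delta$, continuity plus $M_{0}(\chi)$ on the local part, and the bound $2\|f\|_{\infty}M_{\nu}(\chi)(w\delta)^{-\nu}$ on the tail --- and it is correct in structure and in all its estimates.

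Two refinements are worth recording. First, your preliminary claim that finiteness of $M_{\nu}(\chi)$ alone controls $M_{0}(\chi)$ is not quite right: after removing the indices with $|k-\log u|<1$ you are left with up to two terms for which you give no bound, and the index nearest to $\log u$ cannot be bounded by condition (ii) at all, since its weight $|k-\log u|^{\nu}$ may vanish (take $\chi(e^{s})=|s|^{-\nu/2}$ for $0<|s|\le 1$ and $0$ otherwise: then $M_{\nu}(\chi)<\infty$ while $M_{0}(\chi,u)\to\infty$ as $\log u$ approaches an integer). The correct derivation splits at distance $\tfrac12$, so that at most one index $k_{0}$ escapes the moment bound, estimates all others by $2^{\nu}M_{\nu}(\chi)$, and then invokes condition (i) to get $|\chi(e^{-k_{0}}u)|\le 1+2^{\nu}M_{\nu}(\chi)$; thus $M_{0}(\chi)<\infty$ follows from (i) and (ii) jointly, not from (ii) alone. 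Second, for the uniform statement you correctly draw $\delta$ from \emph{log-uniform} continuity; note this is genuinely stronger than membership in $C(\mathbb{R}^{+})$ as the paper later defines it (bounded continuous functions: $f(x)=\sin x$, i.e.\ $\sin(e^{y})$ in the variable $y=\log x$, is bounded and continuous but not log-uniformly continuous), so your proof establishes the uniform claim under the hypothesis $f\in\mathcal{C}(\mathbb{R}^{+})$ actually used in \cite{bardaro7}, which is how the loosely worded statement here should be read.
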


Now we analyse the behaviour of the exponential sampling series at jump discontinuity at $t\in\mathbb{R}^{+}$ when $w\log(t)\in\mathbb{Z}.$ 

\begin{thm}\label{t2}
Let $f:\mathbb{R}^{+}\rightarrow\mathbb{R}$ be a bounded signal and let $t\in\mathbb{R}^{+}$ be a point of non-removable jump discontinuity of $f.$ For a given $\alpha \in \mathbb{R},$ the following statements are equivalent:
\begin{enumerate}
\item[(i)] $\displaystyle \lim_{\stackrel{w \rightarrow \infty}{w\log(t)\in \mathbb{Z}}}
(S_w^{\chi}f)(t)=\alpha f(t+0)+[1-\alpha-\chi(1)]f(t-0)+\chi(1)f(t), \hspace{0.25cm}$\\

\item[(ii)] $\psi_{\chi}^{-}(1)=\alpha,$  \\

\item[(iii)] $\psi_{\chi}^{+}(1)=1-\alpha-\chi(1). $
\end{enumerate}
\end{thm}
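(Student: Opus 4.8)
The plan is to split the equivalence into the easy part \textbf{(ii)} $\Leftrightarrow$ \textbf{(iii)}, handled purely by the normalization of the kernel, and the substantive part \textbf{(i)} $\Leftrightarrow$ \textbf{(ii)}, handled via the representation lemma. For \textbf{(ii)} $\Leftrightarrow$ \textbf{(iii)} I would evaluate the defining series at $u=1$. Since $\log 1=0$, we have $\psi_{\chi}^{+}(1)=\sum_{k<0}\chi(e^{-k})$ and $\psi_{\chi}^{-}(1)=\sum_{k>0}\chi(e^{-k})$, while the missing $k=0$ term is exactly $\chi(1)$. Splitting condition (i) of a kernel, $\sum_{k=-\infty}^{+\infty}\chi(e^{-k})=1$, according to $k<0$, $k=0$, $k>0$ gives $\psi_{\chi}^{+}(1)+\chi(1)+\psi_{\chi}^{-}(1)=1$. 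Hence $\psi_{\chi}^{-}(1)=\alpha$ holds if and only if $\psi_{\chi}^{+}(1)=1-\alpha-\chi(1)$, which is precisely \textbf{(ii)} $\Leftrightarrow$ \textbf{(iii)}.

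For \textbf{(i)} $\Leftrightarrow$ \textbf{(ii)} I would start from the representation lemma in the case $w\log(t)\in\mathbb{Z}$. The first step is to exploit the recurrence of $\psi_{\chi}^{-}$ with fundamental interval $[1,e]$: substituting $u\mapsto eu$ merely shifts the summation index, so $\psi_{\chi}^{-}(eu)=\psi_{\chi}^{-}(u)$ and therefore $\psi_{\chi}^{-}(e^{n})=\psi_{\chi}^{-}(1)$ for every $n\in\mathbb{Z}$. Since $w\log(t)\in\mathbb{Z}$ means $t^{w}=e^{n}$ for some integer $n$, this yields $\psi_{\chi}^{-}(t^{w})=\psi_{\chi}^{-}(1)$, a value independent of $w$ along the admissible sequence, so the $\psi_{\chi}^{-}$ contribution in the lemma is constant.

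The second, and crucial, step is to show that the remainder $(S_{w}^{\chi}h_{t})(t)$ vanishes in the limit. Here I would verify that $h_{t}$ is bounded (immediate from the boundedness of $f$) and, more importantly, that $h_{t}$ is continuous at $t$: by construction $h_{t}(t)=0$, and both one-sided limits of $h_{t}$ at $t$ vanish since $\lim_{x\to t^{-}}h_{t}(x)=f(t-0)-f(t-0)=0$ and $\lim_{x\to t^{+}}h_{t}(x)=f(t+0)-f(t+0)=0$. Theorem \ref{t1} then applies to give $(S_{w}^{\chi}h_{t})(t)\to h_{t}(t)=0$ as $w\to\infty$. Passing to the limit in the representation lemma, the limit of $(S_{w}^{\chi}f)(t)$ exists and equals $\psi_{\chi}^{-}(1)f(t+0)+[1-\psi_{\chi}^{-}(1)-\chi(1)]f(t-0)+\chi(1)f(t)$. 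Comparing this with the right-hand side of \textbf{(i)}, the $\chi(1)f(t)$ terms cancel and, after collecting coefficients, the two expressions coincide if and only if $[\psi_{\chi}^{-}(1)-\alpha][f(t+0)-f(t-0)]=0$. At this point the hypothesis that the jump is \emph{non-removable}, i.e. $f(t+0)\neq f(t-0)$, is indispensable: it lets me cancel the jump factor and conclude $\psi_{\chi}^{-}(1)=\alpha$, giving \textbf{(i)} $\Leftrightarrow$ \textbf{(ii)} and completing the cycle.

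I expect the main obstacle to be the rigorous justification that $(S_{w}^{\chi}h_{t})(t)\to 0$; the rest is bookkeeping. This step reduces entirely to checking the hypotheses of Theorem \ref{t1} for the auxiliary function $h_{t}$, the delicate point being that the two-sided correction built into the definition of $h_{t}$ makes it continuous at $t$ with value zero, so that the convergence theorem for points of continuity can be invoked.
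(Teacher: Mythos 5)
Your proposal is correct and follows essentially the same route as the paper: the representation lemma plus Theorem \ref{t1} applied to the auxiliary function $h_{t}$ (continuous at $t$ with $h_{t}(t)=0$), the recurrence $\psi_{\chi}^{-}(t^{w})=\psi_{\chi}^{-}(1)$ when $w\log(t)\in\mathbb{Z}$, cancellation of the jump factor $f(t+0)-f(t-0)\neq 0$, and the normalization identity $\psi_{\chi}^{+}(1)+\chi(1)+\psi_{\chi}^{-}(1)=1$ for \textbf{(ii)} $\Leftrightarrow$ \textbf{(iii)}. The only differences are cosmetic: you explicitly justify the recurrence by an index shift and correctly state continuity of $h_{t}$ at $t$ (the paper's ``continuous at zero'' is evidently a typo for this).
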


\begin{proof}
First, we prove that $(i)\Longleftrightarrow(ii)$. In view of the representation Lemma \ref{lemma1}, we have
\begin{eqnarray*}
  (S_{w}^{\chi}f)(t)=(S_{w}^{\chi}h_{t})(t)+f(t-0)+\psi_{\chi}^{-}(t^{w})[f(t+0)-f(t-0)]+\chi(1)[f(t)-f(t-0)],
   \end{eqnarray*}
for any $w>0$ such that $w\log(t)\in \mathbb{Z}.$ Since $h_{t}$ is bounded and continuous at zero and using Theorem \ref{t1}, we obtain
$$\displaystyle\lim_{w \rightarrow \infty }(S_{w}^{\chi}h_{t})(t)=0.$$ Thus, we have
\begin{eqnarray*}
\displaystyle \lim_{\stackrel{w \rightarrow \infty}{w\log(t)\in \mathbb{Z}}}
(S_w^{\chi}f)(t)=f(t-0)+\left(\displaystyle \lim_{\stackrel{w \rightarrow \infty}{w\log(t)\in \mathbb{Z}}} \psi_{\chi}^{-}(t^{w})\right) [f(t+0)-f(t-0)]+\chi(1)[f(t)-f(t-0)].
\end{eqnarray*}
Now, we have
$$\displaystyle \lim_{\stackrel{w \rightarrow \infty}{w\log(t)\in \mathbb{Z}}} \psi_{\chi}^{-}(t^{w})
=\displaystyle \lim_{\stackrel{w \rightarrow \infty}{w\log(t)\in \mathbb{Z}}} \left(\sum _{k>w\log(t)}\chi(e^{-k} t^{w})\right).$$
As $\psi_{\chi}^{-}$ is recurrent with fundamental domain $[1, e],$ we get 
$$\psi_{\chi}^{-}(t^{w})=\psi_{\chi}^{-}(1), \,\ \forall w, t \,\ \mbox{such that}  \,\ w\log(t)\in \mathbb{Z}.$$
Therefore, we have
\begin{eqnarray*}
\displaystyle \lim_{\stackrel{w \rightarrow \infty}{w\log(t)\in \mathbb{Z}}}
(S_w^{\chi}f)(t)=\psi_{\chi}^{-}(1) f(t+0)+[1-\psi_{\chi}^{-}(1)-\chi(1)]f(t-0)+\chi(1)f(t).
\end{eqnarray*}

\indent $\mbox{Now}\,\  (i) \Longleftrightarrow
\alpha f(t+0)+[1-\alpha-\chi(1)]f(t-0)+\chi(1)f(t)$
\begin{eqnarray*}
&=&\psi_{\chi}^{-}(1) f(t+0)+[1-\psi_{\chi}^{-}(1)-\chi(1)]f(t-0)+\chi(1)f(t)\\
&\Longleftrightarrow& \psi_{\chi}^{-}(1)(f(t+0)-f(t-0))=\alpha(f(t+0)-f(t-0))\\
&\Longleftrightarrow& \psi_{\chi}^{-}(1)=\alpha \\
&\Longleftrightarrow& (ii)\,\ \mbox{holds}.
 \end{eqnarray*}
   Since $\displaystyle\sum_{k=- \infty}^{+\infty} \chi(e^{-k} t^{w})=1,$ we have
   $$\psi_{\chi}^{+}(1)=1-{\chi}(1)-\psi_{\chi}^{-}(1).$$
   This implies that $(ii)\Longleftrightarrow(iii).$ Hence, the proof is completed.
\end{proof}

Next we analyse the behaviour of the exponential sampling series at jump discontinuity at $t\in\mathbb{R}^{+}$ when $w\log(t)\notin \mathbb{Z}.$

\begin{thm}\label{t3}
Let $f:\mathbb{R}^{+}\rightarrow\mathbb{R}$ be a bounded signal and let $t\in\mathbb{R}^{+}$ be a point of non-removable jump discontinuity of $f.$ Let $\alpha \in \mathbb{R}.$ Then the following statements are equivalent:
\begin{enumerate}
\item[(i)] $\displaystyle \lim_{\stackrel{w \rightarrow \infty}{w\log(t)\notin \mathbb{Z}}} (S_w^{\chi}f)(t)=\alpha f(t+0)+(1-\alpha)f(t-0),$\\

\item[(ii)] $\psi_{\chi}^{-}(u)=\alpha, \hspace{0.5cm} u \in(1, e) $ \\

\item[(iii)]  $\psi_{\chi}^{+}(u)=1-\alpha, \hspace{0.5cm} u\in(1, e).$

\end{enumerate}
\end{thm}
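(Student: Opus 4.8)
The plan is to follow the template of the proof of Theorem \ref{t2}, replacing the constraint $w\log(t)\in\mathbb{Z}$ by $w\log(t)\notin\mathbb{Z}$ and using the second branch of the representation lemma. First I would invoke Lemma \ref{lemma1} in the form
$$(S_{w}^{\chi}f)(t)=(S_{w}^{\chi}h_{t})(t)+f(t-0)+\psi_{\chi}^{-}(t^{w})[f(t+0)-f(t-0)],$$
valid whenever $w\log(t)\notin\mathbb{Z}$. Since $h_{t}$ is bounded and continuous at $t$ with $h_{t}(t)=0$, Theorem \ref{t1} yields $(S_{w}^{\chi}h_{t})(t)\to 0$ as $w\to\infty$. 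Because $t$ is a non-removable jump, $f(t+0)-f(t-0)$ is a nonzero constant that can be divided out. Hence statement (i) is equivalent to the single assertion
$$\lim_{\stackrel{w \rightarrow \infty}{w\log(t)\notin \mathbb{Z}}}\psi_{\chi}^{-}(t^{w})=\alpha,$$
and the whole theorem reduces to understanding this one limit.

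The heart of the argument is the equivalence of this limit with (ii), and the key tool is the recurrence of $\psi_{\chi}^{-}$ on its fundamental interval $[1,e]$. Writing $t^{w}=e^{w\log t}$ and letting $\{x\}$ denote the fractional part, recurrence gives $\psi_{\chi}^{-}(t^{w})=\psi_{\chi}^{-}(e^{\{w\log t\}})$, and the hypothesis $w\log(t)\notin\mathbb{Z}$ forces $\{w\log t\}\in(0,1)$, so the argument $e^{\{w\log t\}}$ always lies in the open interval $(1,e)$. The direction (ii)$\Rightarrow$(i) is then immediate: if $\psi_{\chi}^{-}\equiv\alpha$ on $(1,e)$ then $\psi_{\chi}^{-}(t^{w})=\alpha$ for every admissible $w$, the limit is $\alpha$, and (i) follows from the reduction above.

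The main obstacle is the converse (i)$\Rightarrow$(ii): a statement about one limiting value $\alpha$ must be upgraded to the pointwise identity $\psi_{\chi}^{-}(u)=\alpha$ for \emph{every} $u\in(1,e)$. For this I would use a realizability argument. Fix $u_{0}\in(1,e)$ and write $u_{0}=e^{\theta}$ with $\theta\in(0,1)$; the goal is to produce a sequence $w_{n}\to\infty$ with $w_{n}\log(t)\notin\mathbb{Z}$ along which $\psi_{\chi}^{-}(t^{w_{n}})$ is identically $\psi_{\chi}^{-}(u_{0})$. Assuming $t\neq 1$ (if $t=1$ then $w\log(t)\equiv 0\in\mathbb{Z}$ and the hypothesis is vacuous), set $w_{n}=(m_{n}+\theta)/\log t$ for integers $m_{n}$ chosen with $m_{n}\to+\infty$ when $t>1$ and $m_{n}\to-\infty$ when $t<1$; in both cases $w_{n}>0$, $w_{n}\to\infty$, and $\{w_{n}\log t\}=\theta$, so $\psi_{\chi}^{-}(t^{w_{n}})=\psi_{\chi}^{-}(e^{\theta})=\psi_{\chi}^{-}(u_{0})$. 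Since the reduction already shows the full limit equals $\alpha$, evaluating along this sequence forces $\psi_{\chi}^{-}(u_{0})=\alpha$; as $u_{0}\in(1,e)$ is arbitrary, (ii) follows.

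Finally, for (ii)$\Leftrightarrow$(iii) I would exploit the partition-of-unity condition (i) on the kernel. For $u\in(1,e)$ we have $\log u\in(0,1)$, which is not an integer, so no summation index $k$ coincides with $\log u$; consequently the two tails exhaust the whole series,
$$\psi_{\chi}^{+}(u)+\psi_{\chi}^{-}(u)=\sum_{k=-\infty}^{+\infty}\chi(ue^{-k})=1,$$
with no $\chi(1)$ term present, in contrast to the integer case of Theorem \ref{t2}. Therefore $\psi_{\chi}^{-}(u)=\alpha$ for all $u\in(1,e)$ if and only if $\psi_{\chi}^{+}(u)=1-\alpha$ for all $u\in(1,e)$, which is precisely (ii)$\Leftrightarrow$(iii) and closes the chain of equivalences.
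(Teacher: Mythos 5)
Your proof is correct and follows essentially the same route as the paper: the non-integer branch of Lemma \ref{lemma1}, the vanishing of $(S_{w}^{\chi}h_{t})(t)$ via Theorem \ref{t1}, division by the nonzero jump $f(t+0)-f(t-0)$, and the identity $\psi_{\chi}^{+}(u)+\psi_{\chi}^{-}(u)=1$ on $(1,e)$ for (ii)$\Leftrightarrow$(iii). The only difference is that you make explicit the realizability step --- choosing $w_{n}=(m_{n}+\theta)/\log t$ so that $\{w_{n}\log t\}=\theta$ hits each $u_{0}=e^{\theta}\in(1,e)$, with the sign care for $t<1$ and the observation that $t=1$ makes the hypothesis vacuous --- which the paper compresses into the bare equivalence $\lim_{w}\psi_{\chi}^{-}(t^{w})=\alpha\Longleftrightarrow\psi_{\chi}^{-}(u)=\alpha$ for all $u\in(1,e)$.
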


\begin{proof}
Using representation Lemma \ref{lemma1}, we obtain
\begin{eqnarray*}
 \displaystyle \lim_{\stackrel{w \rightarrow \infty}{w\log(t)\notin \mathbb{Z}}} (S_{w}^{\chi}f)(t)
 =f(t-0)+\left(\displaystyle \lim_{\stackrel{w \rightarrow \infty}{w\log(t)\notin \mathbb{Z}}}\psi_{\chi}^{-}(t^{w})\right)[f(t+0)-f(t-0)]
   \end{eqnarray*}
   \begin{eqnarray*}
   (i)&\Longleftrightarrow &
\alpha f(t+0)+(1-\alpha)f(t-0)=f(t-0)+\left(\displaystyle \lim_{\stackrel{w \rightarrow \infty}{w\log(t)\notin \mathbb{Z}}}\psi_{\chi}^{-}(t^{w})\right)[f(t+0)-f(t-0)]\\
&\Longleftrightarrow &
\alpha [f(t+0)-f(t-0)]=\left(\displaystyle \lim_{\stackrel{w \rightarrow \infty}{w\log(t)\notin \mathbb{Z}}}\psi_{\chi}^{-}(t^{w})\right)[f(t+0)-f(t-0)]\\
&\Longleftrightarrow &
\alpha=\displaystyle \lim_{\stackrel{w \rightarrow \infty}{w\log(t)\notin \mathbb{Z}}}
\psi_{\chi}^{-}(t^{w})\\
&\Longleftrightarrow &
\alpha=\psi_{\chi}^{-}(u),  \hspace{0.5cm} \forall u \in(1, e)\\
&\Longleftrightarrow& (ii)\,\ \mbox{holds}.
\end{eqnarray*}
Let $w\log(t)\notin \mathbb{Z}.$ Then, we have
$$\psi_{\chi}^{+}(t^{w})+\psi_{\chi}^{-}(t^{w})=1.$$ Thus, we obtain
$$(ii)\Longleftrightarrow \psi_{\chi}^{+}(u)=1-\alpha, \hspace{0.5cm} u\in(1, e) .$$
\end{proof}

The results in the above Theorem \ref{t3} was proved by assuming that $\psi_{\chi}^{-}(u)$ is constant on $(1, e).$ In what follows, we show that if $\psi_{\chi}^{-}(u)$ is not a constant on $(1,e),$ then the exponential sampling series can not converge at jump discontinuities. 

\begin{thm}\label{t4}
 Let $\chi$ be a kernel such that $\psi_{\chi}^{-}(u)$ is not constant on $(1,e).$  Let $f:\mathbb{R}^{+}\rightarrow\mathbb{R}$ be a bounded signal with a non-removable jump discontinuity at $t\in\mathbb{R}^{+}.$ Then $(S_w^{\chi}f)(t)$ does not converge  point-wise at $t$.
\end{thm}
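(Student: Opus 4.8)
The plan is to use the representation Lemma~\ref{lemma1} to reduce convergence of $(S_w^{\chi}f)(t)$ to convergence of the single quantity $\psi_{\chi}^{-}(t^{w})$, and then to exhibit two sequences $w_n\to\infty$ along which $\psi_{\chi}^{-}(t^{w})$ tends to two distinct values. Throughout I take $t\neq 1$, so that $\log t\neq 0$; the point $t=1$ is genuinely degenerate, since there $t^{w}\equiv 1$ forces $\psi_{\chi}^{-}(t^{w})\equiv\psi_{\chi}^{-}(1)$ and the series actually converges, so it must be excluded. The structural fact I would record first is that $\psi_{\chi}^{-}$ is log-periodic: a change of summation index in its definition gives $\psi_{\chi}^{-}(eu)=\psi_{\chi}^{-}(u)$ for every $u$, i.e. $s\mapsto\psi_{\chi}^{-}(e^{s})$ is $1$-periodic. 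This is exactly the recurrence with fundamental interval $[1,e]$ noted after the definition, and it lets me control $\psi_{\chi}^{-}(t^{w})=\psi_{\chi}^{-}(e^{w\log t})$ through the fractional part of $w\log t$.

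For the reduction, I observe that $h_{t}$ is bounded and continuous at $t$ with $h_{t}(t)=0$, since both one-sided limits $\lim_{x\to t^-}h_t(x)=f(t-0)-f(t-0)=0$ and $\lim_{x\to t^+}h_t(x)=f(t+0)-f(t+0)=0$ vanish. Hence Theorem~\ref{t1} yields $(S_w^{\chi}h_t)(t)\to 0$ as $w\to\infty$. Combining this with the $w\log t\notin\mathbb{Z}$ branch of Lemma~\ref{lemma1},
$$(S_w^{\chi}f)(t)=(S_w^{\chi}h_t)(t)+f(t-0)+\psi_{\chi}^{-}(t^{w})\,[f(t+0)-f(t-0)],$$
so along any sequence $w_n\to\infty$ with $w_n\log t\notin\mathbb{Z}$ one has
$$\lim_{n}(S_{w_n}^{\chi}f)(t)=f(t-0)+\Big(\lim_{n}\psi_{\chi}^{-}(t^{w_n})\Big)[f(t+0)-f(t-0)],$$
whenever the right-hand limit exists. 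Because the jump is non-removable, $f(t+0)-f(t-0)\neq 0$, so along such sequences convergence of $(S_{w_n}^{\chi}f)(t)$ is equivalent to convergence of $\psi_{\chi}^{-}(t^{w_n})$.

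Now I invoke non-constancy to break convergence. I choose $u_1,u_2\in(1,e)$ with $\psi_{\chi}^{-}(u_1)\neq\psi_{\chi}^{-}(u_2)$ and write $u_j=e^{s_j}$ with $s_j\in(0,1)$. For each $j$ I solve $w\log t=s_j+m$ over integers $m$: the values $w_n^{(j)}=(s_j+m)/\log t$ can be made positive and driven to $+\infty$ by taking $m\to+\infty$ if $\log t>0$ or $m\to-\infty$ if $\log t<0$. These satisfy $w_n^{(j)}\log t=s_j+m\notin\mathbb{Z}$, and by log-periodicity $\psi_{\chi}^{-}\big(t^{w_n^{(j)}}\big)=\psi_{\chi}^{-}(e^{s_j+m})=\psi_{\chi}^{-}(e^{s_j})=\psi_{\chi}^{-}(u_j)$. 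Thus the two sequences produce subsequential limits $f(t-0)+\psi_{\chi}^{-}(u_j)[f(t+0)-f(t-0)]$ of $(S_w^{\chi}f)(t)$, and these are distinct since $\psi_{\chi}^{-}(u_1)\neq\psi_{\chi}^{-}(u_2)$ and $f(t+0)\neq f(t-0)$. Therefore $(S_w^{\chi}f)(t)$ cannot converge, which is the claim.

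The routine verifications are the index shift establishing log-periodicity and the continuity of $h_t$ at $t$; neither presents difficulty. The genuinely delicate points are all in the reduction step: I must ensure the sequences $w_n^{(j)}$ simultaneously tend to $+\infty$ and keep $w_n^{(j)}\log t$ off the integers, so that the correct branch of Lemma~\ref{lemma1}—the one without the extra $\chi(1)$ term—applies, and I must flag the exclusion of $t=1$, where both the argument and the conclusion fail.
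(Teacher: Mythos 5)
Your proof is correct, and its core route coincides with the paper's: both reduce the question, via the representation Lemma~\ref{lemma1} together with $(S_w^{\chi}h_t)(t)\to 0$ from Theorem~\ref{t1}, to whether $\psi_{\chi}^{-}(t^{w})$ converges as $w\to\infty$, and then exploit recurrence plus non-constancy of $\psi_{\chi}^{-}$. The difference is in execution, and yours is strictly more complete. The paper argues by contradiction and, at the decisive moment, merely asserts that the existence of $\lim_{w\to\infty}\psi_{\chi}^{-}(t^{w})=C$ ``fails to satisfy that $\psi_{\chi}^{-}$ is recurrent and not a constant,'' leaving the mechanism implicit; your explicit construction of the two sequences $w_n^{(j)}=(s_j+m)/\log t$ with $s_j\in(0,1)$, chosen so that $w_n^{(j)}\log t\notin\mathbb{Z}$ (hence the branch of Lemma~\ref{lemma1} without the $\chi(1)$ term applies) and so that log-periodicity pins $\psi_{\chi}^{-}\bigl(t^{w_n^{(j)}}\bigr)=\psi_{\chi}^{-}(u_j)$ exactly, supplies precisely the missing content, yielding two distinct subsequential limits and hence divergence. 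You also caught a genuine oversight in the statement itself: at $t=1$ one has $t^{w}\equiv 1$ and $w\log t=0\in\mathbb{Z}$ for every $w$, so by the integer branch of Lemma~\ref{lemma1} the series $(S_w^{\chi}f)(1)$ converges to $f(1-0)+\psi_{\chi}^{-}(1)[f(1+0)-f(1-0)]+\chi(1)[f(1)-f(1-0)]$ regardless of whether $\psi_{\chi}^{-}$ is constant; the theorem as printed is false there, and your exclusion of $t=1$ (equivalently, the tacit assumption $\log t\neq 0$ that the paper's proof uses without stating) is necessary.
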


\begin{proof}
Suppose not. Then $\displaystyle \lim_{\stackrel{w \rightarrow \infty}{w\log(t)\notin \mathbb{Z}}} (S_w^{\chi}f)(t)=\ell,$ for some $\ell\in\mathbb{R}^{+}.$ By the uniqueness of the limit and Lemma \ref{lemma1}, we obtain
\begin{eqnarray*}
\ell=f(t-0)+\displaystyle\lim_{w \rightarrow \infty }\psi_{\chi}^{-}(t^{w})[f(t+0)-f(t-0)].
\end{eqnarray*}
Since $f(t+0)-f(t-0)\neq 0,$ we obtain
\begin{eqnarray*}
\frac{\ell-f(t-0)}{f(t+0)-f(t-0)}=\displaystyle\lim_{w \rightarrow \infty }\psi_{\chi}^{-}(t^{w}).
\end{eqnarray*}
The above expression gives a contradiction. Indeed, if
\begin{eqnarray*}
\displaystyle\lim_{w \rightarrow \infty }\psi_{\chi}^{-}(t^{w})=C,
\end{eqnarray*}
where $C$ is a constant, then it fails to satisfy that $\psi_{\chi}^{-}$ is recurrent and not a constant, hence the theorem proved.
\end{proof}

Finally, the more general theorem of the exponential sampling series at jump discontinuity at $t\in\mathbb{R}^{+}$ for any bounded signal can be proved. 

\begin{thm}\label{t5}
Let $f:\mathbb{R}^{+}\rightarrow\mathbb{R}$ be a bounded signal and let $t\in\mathbb{R}^{+}$ be a point of non-removable jump discontinuity of $f.$ Let $\alpha \in \mathbb{R}.$ 
Suppose that the kernel $\chi$ satisfies the additional condition that $\chi(1)=0.$  Then, the following statements are equivalent:
\begin{enumerate}
\item[(i)] $\displaystyle \lim_{w \rightarrow \infty}(S_w^{\chi}f)(t)=\alpha f(t+0)+(1-\alpha)f(t-0),$\\

\item[(ii)] $\psi_{\chi}^{-}(u)=\alpha, \hspace{0.5cm} u \in[1, e) $ \\

\item[(iii)]  $\psi_{\chi}^{+}(u)=1-\alpha, \hspace{0.5cm} u\in[1, e).$
\end{enumerate}
Moreover, if in addition we assume that $\chi$ is continuous on $\mathbb{R}^{+},$ then the above statements are equivalent to the following statements: 
\begin{enumerate}
\item[(iv)] 
\begin{eqnarray*}
\int_{0}^{1} \chi (u) u^{2k\pi i}\frac{du}{u}
&=&\left\{
\begin{array}{ll}
0 , &  \ \ \ \mbox{if} \ \ \ k \neq 0 \\
\alpha, & \ \ \ \mbox{if} \ \ \ k= 0
\end{array}
\right.
\end{eqnarray*}
\item[(v)]
\begin{eqnarray*}
\int_{1}^{\infty} \chi (u) u^{2k\pi i }\frac{du}{u}
&=&\left\{
\begin{array}{ll}
0 , &  \ \ \ \mbox{if} \ \ \ k \neq 0 \\
1-\alpha, & \ \ \ \mbox{if} \ \ \ k= 0.
\end{array}
\right.
\end{eqnarray*}
\end{enumerate}
\end{thm}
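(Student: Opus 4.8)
The plan is to derive $(i)\Leftrightarrow(ii)\Leftrightarrow(iii)$ by specialising Lemma \ref{lemma1} and Theorems \ref{t2}, \ref{t3} to the case $\chi(1)=0$, and then, under the extra continuity hypothesis, to recognise $(iv)$ and $(v)$ as the vanishing of the nonzero Fourier coefficients of the recurrent functions $\psi_{\chi}^{-}$ and $\psi_{\chi}^{+}$ after passing to logarithmic variables.

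For the first block, I would start from the observation that $\chi(1)=0$ erases the $\chi(1)[f(t)-f(t-0)]$ term in Lemma \ref{lemma1}, so that
$$(S_{w}^{\chi}f)(t)=(S_{w}^{\chi}h_{t})(t)+f(t-0)+\psi_{\chi}^{-}(t^{w})[f(t+0)-f(t-0)]$$
holds for every $w>0$, irrespective of whether $w\log t\in\mathbb{Z}$. Since $h_{t}$ is bounded and continuous at $t$ with $h_{t}(t)=0$, Theorem \ref{t1} gives $(S_{w}^{\chi}h_{t})(t)\to0$, so the unrestricted limit $\lim_{w\to\infty}(S_{w}^{\chi}f)(t)$ exists and equals $\alpha f(t+0)+(1-\alpha)f(t-0)$ precisely when $\lim_{w\to\infty}\psi_{\chi}^{-}(t^{w})=\alpha$. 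Writing $w\log t=n+r$ with $n\in\mathbb{Z}$ and $r\in[0,1)$ and using the recurrence $\psi_{\chi}^{-}(eu)=\psi_{\chi}^{-}(u)$, one has $\psi_{\chi}^{-}(t^{w})=\psi_{\chi}^{-}(e^{r})$ with $e^{r}\in[1,e)$; arguing exactly as in Theorem \ref{t3}, the limit equals $\alpha$ iff $\psi_{\chi}^{-}\equiv\alpha$ on $[1,e)$, the sole new point being that the value $u=1$ (the case $w\log t\in\mathbb{Z}$) is now included. This proves $(i)\Leftrightarrow(ii)$. For $(ii)\Leftrightarrow(iii)$ I would use the normalisation $\sum_{k}\chi(e^{-k}u)=1$: on $(1,e)$ the index $\log u$ is never an integer, so $\psi_{\chi}^{+}(u)+\psi_{\chi}^{-}(u)=1$, while at $u=1$ the omitted middle term is $\chi(1)=0$, so the identity persists on all of $[1,e)$.

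For the second block I would assume $\chi$ continuous, set $g_{\mp}(x):=\psi_{\chi}^{\mp}(e^{x})$, and note that these are continuous and $1$-periodic (the absolute convergence coming from $M_{\nu}(\chi)<\infty$). The computational core is the Fourier coefficient identity: from $\psi_{\chi}^{-}(e^{x})=\sum_{j\geq1}\chi(e^{x-j})$ on $[0,1)$, substituting $y=x-j$ in each summand (so that $e^{2\pi ikx}=e^{2\pi iky}$, since $kj\in\mathbb{Z}$) and using that the intervals $[-j,1-j]$, $j\geq1$, tile $(-\infty,0]$, I would telescope to
$$\int_{0}^{1} g_{-}(x)e^{2\pi ikx}\,dx=\int_{-\infty}^{0}\chi(e^{y})e^{2\pi iky}\,dy=\int_{0}^{1}\chi(u)u^{2k\pi i}\,\frac{du}{u}.$$
The same substitution with $j\leq0$ (using $\chi(1)=0$ to absorb the boundary index $j=0$) and the tiling of $[0,\infty)$ gives $\int_{0}^{1} g_{+}(x)e^{2\pi ikx}\,dx=\int_{1}^{\infty}\chi(u)u^{2k\pi i}\frac{du}{u}$. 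Hence $(iv)$ and $(v)$ say exactly that all nonzero Fourier coefficients of $g_{-}$, respectively $g_{+}$, vanish while the zeroth equals $\alpha$, respectively $1-\alpha$. Since a continuous $1$-periodic function is constant iff its nonzero Fourier coefficients all vanish, this is $(ii)$, respectively $(iii)$, closing the chain.

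I expect the main obstacle to be the rigorous justification of the Fourier coefficient identity, namely the interchange of sum and integral and the telescoping of the half-line integral: one must verify $\sum_{j}\int_{0}^{1}|\chi(e^{x-j})|\,dx<\infty$ to invoke Fubini, which is exactly where the moment condition $M_{\nu}(\chi)<\infty$ enters, alongside the continuity of $\chi$ needed both for the continuity of $g_{\mp}$ and for the uniqueness theorem for Fourier series. The discrepancy between $u^{2k\pi i}$ and $u^{-2k\pi i}$ is harmless, since the conditions in $(iv)$ and $(v)$ are invariant under $k\mapsto-k$.
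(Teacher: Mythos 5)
Your proposal is correct and substantively follows the paper's own route: the equivalence of (i)--(iii) is obtained, exactly as in the paper, by noting that $\chi(1)=0$ makes the representation of Lemma \ref{lemma1} uniform in $w$ and then repeating the arguments of Theorems \ref{t2} and \ref{t3} (with $u=1$ now included in the fundamental interval), and the equivalence with (iv)--(v) rests on the same identification of $\int_{0}^{1}\chi(u)u^{2k\pi i}\frac{du}{u}$ as the $k$-th Fourier coefficient, in logarithmic variables, of the recurrent function $\psi_{\chi}^{-}$ --- that is, as $\widehat{[\chi_{0}]_{M}}(2k\pi i)$ for the truncated kernel $\chi_{0}=\chi$ on $(0,1)$, $\chi_0=0$ on $[1,\infty)$. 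The only real difference is in execution of this second block: the paper invokes the Mellin--Poisson summation formula as a black box, writing $\psi_{\chi}^{-}(u)=\sum_{k}\widehat{[\chi_{0}]_{M}}(2k\pi i)\,u^{-2k\pi i}$ and reading off (ii) $\Leftrightarrow$ (iv) from the expansion, whereas you prove the coefficient identity by hand (Fubini plus the telescoping substitution over the tiling intervals $[-j,1-j]$) and close with the uniqueness theorem for Fourier series of continuous periodic functions. Your version is, if anything, slightly more careful: the paper's pointwise use of Poisson summation tacitly assumes the Fourier expansion converges pointwise to $\psi_{\chi}^{-}$, which your uniqueness argument sidesteps, and your Fubini check $\sum_{j\ge 1}\int_{0}^{1}|\chi(e^{x-j})|\,dx\le M_{0}(\chi)$ supplies a justification the paper omits (pedantically it is $M_{0}(\chi)<\infty$, which the paper uses freely, rather than $M_{\nu}(\chi)<\infty$ alone, that delivers both this bound and the uniform convergence giving continuity of $\psi_{\chi}^{\mp}$). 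Finally, you obtain (v) symmetrically via the analogous computation for $\psi_{\chi}^{+}$, using $\chi(1)=0$ to absorb the boundary index $j=0$, while the paper instead derives (iv) $\Leftrightarrow$ (v) from the normalisation $\widehat{[\chi]_{M}}(2k\pi i)=\delta_{k0}$ by splitting the Mellin integral at $u=1$; both are valid, the paper's shortcut being marginally quicker and your route more self-contained.
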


\begin{proof}
Proceeding along the lines proof of Theorem \ref{t2} and Theorem \ref{t3}, we see that $(i), (ii)$ and $(iii)$ are equivalent. Let $\chi$ be continuous on $\mathbb{R}^{+}$ and let 
\begin{eqnarray*}
\chi_0(u)
&=&\left\{
\begin{array}{ll}
\chi (u) , &  \ \ \ \mbox{for} \ \ \ u<1 \\
0, & \ \ \ \mbox{for} \ \ \ u\geq 1.
\end{array}
\right.
\end{eqnarray*}
Then, we have 
\begin{eqnarray*}
\psi_{\chi}^{-}(u)=\sum _{k>\log u}\chi(ue^{-k})=\sum _{k\in\mathbb{Z}}\chi_0(ue^{-k}).
\end{eqnarray*}
Therefore, $\psi_{\chi}^{-}$ is recurrent continuous function with the fundamental interval $[1,e].$ Using Mellin-Poisson summation formula, we obtain  
\begin{eqnarray*}
 \psi_{\chi}^{-}(u)=\sum_{k= - \infty}^{+\infty}\widehat{[\chi_0]_{M}}(2k \pi i) \ u^{-2 k \pi i}
 =\sum_{k= - \infty}^{+\infty}\left(\int_{0}^{1} \chi (u) u^{2k\pi i}\frac{du}{u}\right)u^{-2 k \pi i}.
\end{eqnarray*}
Therefore, we obtain 
\begin{eqnarray*}
 \psi_{\chi}^{-}(u)&=&\alpha, \forall u\in[1,e)\\
&\Longleftrightarrow & \widehat{[\chi]_{M}}(2k \pi i)
=\left\{
\begin{array}{ll}
0 , &  \ \ \ \mbox{if} \ \ \ k \neq 0 \\
\alpha, & \ \ \ \mbox{if} \ \ \ k= 0
\end{array}
\right.\\
&\Longleftrightarrow &
\int_{0}^{1} \chi (u) u^{2k\pi i}\frac{du}{u}
=\left\{
\begin{array}{ll}
0 , &  \ \ \ \mbox{if} \ \ \ k \neq 0 \\
\alpha, & \ \ \ \mbox{if} \ \ \ k= 0.
\end{array}
\right.
\end{eqnarray*}
This implies that $(ii) \Longleftrightarrow  (iv).$ Finally using the condition 
\begin{eqnarray*}
\sum_{k=- \infty}^{+\infty} \chi(e^{-k}u) =1
\Longleftrightarrow 
\widehat{[\chi]_{M}}(2k \pi i)
=\left\{
\begin{array}{ll}
0 , &  \ \ \ \mbox{if} \ \ \ k \neq 0 \\
1, & \ \ \ \mbox{if} \ \ \ k= 0
\end{array}
\right.\\
\end{eqnarray*}
the equivalence between $(iv)$ and $(v)$ can be established easily. Thus the proof is completed. 
\end{proof}

\begin{remark}
Let $f:\mathbb{R}^{+}\rightarrow\mathbb{R}$ be a bounded signal with a removable discontinuity $t\in\mathbb{R}^{+}$, i.e. $f(t+0)=f(t-0)=\ell.$ Then we have
\begin{enumerate}
\item[(i)]  $\displaystyle \lim_{\stackrel{w \rightarrow \infty}{w\log(t)\in \mathbb{Z}}} (S_w^{\chi}f)(t)=\ell+\chi(1)[f(t)-\ell],$\\

\item[(ii)] $\displaystyle \lim_{\stackrel{w \rightarrow \infty}{w\log(t)\notin \mathbb{Z}}} (S_w^{\chi}f)(t)=\ell,$\\

\item[(iii)] If $\chi(1)=0,$ then $\displaystyle \lim_{w \rightarrow \infty}(S_w^{\chi}f)(t)=\ell.$
\end{enumerate}
\end{remark}

\section{Degree of Approximation}
In this section, we estimate the order of convergence of the exponential sampling series by using the logarithmic modulus of continuity. Let $C(\mathbb{R}^+$) denote the space of all real valued bounded continuous functions on $\mathbb{R}^+$ equipped with the supremum norm $\|f\|_{\infty} := \sup_{x \in \mathbb{R}^+} |f(x)|.$ We say that a function $f: \mathbb{R}^+ \rightarrow \mathbb{R}$ is log-uniformly continuous if the following hold: 
for a given $\epsilon > 0,$ there exists $\delta > 0$ such that $|f(p) -f(q)| < \epsilon$ whenever $| \log p - \log q| < \delta,$ for any $p, q \in \mathbb{R}^{+}.$ The subspace consisting of all bounded log-uniformly continuous functions on $\mathbb{R}^{+}$ is denoted by $\mathcal{C} (\mathbb{R}^+).$ Let $f\in C(\mathbb{R}^{+}).$ Then the logarithmic modulus of continuity is defined by
$$ \omega(f,\delta):= \sup \{|f(p)-f(q)|:\  \mbox{whenever} \  |\log (p)-\log (q)| \leq \delta,\  \ \delta \in \mathbb{R}^{+}\} .$$
The logarithmic modulus of continuity satisfies the following properties:
\begin{itemize}
\item[(a)] $\omega(f, \delta) \rightarrow 0,$ as $\delta\rightarrow 0.$
\item[(b)] $\omega(f, c \delta) \leq (c+1) \omega(f,\delta),$ for every $\delta, c>0.$
\item[(c)] $|f(p)-f(q)|\leq  \omega(f,\delta)\left(1+\dfrac{|\log p-\log q|}{\delta}\right).$
\end{itemize}
Further properties of logarithmic modulus of continuity can be seen in \cite{bardaro9}. In the following theorem, we obtain the order of convergence for the exponential sampling series when $M_{\nu}(\chi)<\infty$ for $0<\nu<1.$

\begin{thm} \label{t8}
Let $\chi\in\Phi$ be a kernel such that $M_{\nu}(\chi)<\infty$ for $0<\nu<1$ and $ f \in \mathcal{C}(\mathbb{R}^+)$. Then for sufficiently large $w>0,$ the following hold: 
 $$ |(S_{w}^{\chi}f)(t) - f(t)| \leq  \omega (f,w^{-\nu})[M_{\nu}(\chi)+2M_{0}(\chi)]+2^{\nu+1}\|f\|_{\infty}M_{\nu}(\chi)w^{-\nu},$$
for every $t\in\mathbb{R}^{+}.$
\end{thm}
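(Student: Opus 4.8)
The plan is to begin from the partition-of-unity property (i) of the kernel, which gives $\sum_{k}\chi(e^{-k}t^{w})=1$ for every $t\in\mathbb{R}^{+}$, and to rewrite the error as a single series
\[
(S_{w}^{\chi}f)(t)-f(t)=\sum_{k=-\infty}^{+\infty}\chi(e^{-k}t^{w})\bigl(f(e^{k/w})-f(t)\bigr).
\]
Taking absolute values and noting that $|\log e^{k/w}-\log t|=|k/w-\log t|=a_{k}/w$ with $a_{k}:=|k-w\log t|$, I reduce the estimate to controlling $\sum_{k}|\chi(e^{-k}t^{w})|\,|f(e^{k/w})-f(t)|$. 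The weight $a_{k}$ is exactly the quantity appearing in the moments $M_{\nu}(\chi,t^{w})$, and I shall bound everything by $M_{\nu}(\chi)$ and $M_{0}(\chi)$ via $M_{\nu}(\chi,t^{w})\le M_{\nu}(\chi)$.

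Next I would split $\mathbb{Z}=I_{1}\cup I_{2}$ according to whether $|k/w-\log t|\le 1/2$ (equivalently $a_{k}\le w/2$) or $a_{k}>w/2$. On the far region $I_{2}$ I use only boundedness, $|f(e^{k/w})-f(t)|\le 2\|f\|_{\infty}$, together with the elementary inequality $1\le(2a_{k}/w)^{\nu}=2^{\nu}a_{k}^{\nu}w^{-\nu}$, which is valid precisely because $a_{k}>w/2$ and $\nu>0$. This converts the bare sum of $|\chi|$ into a fractional-moment sum:
\[
\sum_{k\in I_{2}}|\chi(e^{-k}t^{w})|\le 2^{\nu}w^{-\nu}\sum_{k\in I_{2}}|\chi(e^{-k}t^{w})|a_{k}^{\nu}\le 2^{\nu}w^{-\nu}M_{\nu}(\chi),
\]
so the $I_{2}$ contribution is at most $2^{\nu+1}\|f\|_{\infty}M_{\nu}(\chi)w^{-\nu}$, which is exactly the last term in the claimed bound.

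On the near region $I_{1}$ I would invoke property (c) of the logarithmic modulus of continuity with $\delta=w^{-\nu}$, giving $|f(e^{k/w})-f(t)|\le \omega(f,w^{-\nu})\bigl(1+a_{k}w^{\nu-1}\bigr)$. The constant part sums to at most $M_{0}(\chi)$. For the linear part I use the interpolation $a_{k}=a_{k}^{1-\nu}a_{k}^{\nu}\le (w/2)^{1-\nu}a_{k}^{\nu}$, which holds on $I_{1}$ since $a_{k}\le w/2$ and $1-\nu>0$; then
\[
w^{\nu-1}\sum_{k\in I_{1}}|\chi(e^{-k}t^{w})|a_{k}\le 2^{\nu-1}\sum_{k\in I_{1}}|\chi(e^{-k}t^{w})|a_{k}^{\nu}\le M_{\nu}(\chi),
\]
using $2^{\nu-1}\le 1$ for $\nu\le 1$. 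Hence the $I_{1}$ contribution is at most $\omega(f,w^{-\nu})\bigl(M_{0}(\chi)+M_{\nu}(\chi)\bigr)$. Adding the two regions and crudely using $M_{0}(\chi)\le 2M_{0}(\chi)$ yields the stated estimate.

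The main obstacle, and the reason for the whole design, is that only the fractional moment $M_{\nu}(\chi)$ with $0<\nu<1$ is assumed finite, not the first moment $M_{1}(\chi)$. A naive application of property (c) over all of $\mathbb{Z}$ would produce $w^{\nu-1}M_{1}(\chi)$, which need not be finite. The splitting at $|k/w-\log t|\sim 1/2$ together with the two tailored inequalities ($1\le(2a_{k}/w)^{\nu}$ on $I_{2}$ and $a_{k}^{1-\nu}\le(w/2)^{1-\nu}$ on $I_{1}$) is precisely what dominates the linear weight $a_{k}$ by the available fractional weight $a_{k}^{\nu}$ on each region. The hypothesis "for sufficiently large $w$" plays only a minor role: the displayed inequality itself holds for all $w>0$, and largeness of $w$ is used afterwards, via property (a) and $f\in\mathcal{C}(\mathbb{R}^{+})$, to guarantee $\omega(f,w^{-\nu})\to 0$ so that the whole bound tends to $0$.
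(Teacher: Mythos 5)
Your proposal is correct and follows essentially the same route as the paper: the same partition-of-unity rewriting of the error, the same split of the sum at $|k-w\log t|=w/2$, and an identical tail estimate producing the term $2^{\nu+1}\|f\|_{\infty}M_{\nu}(\chi)w^{-\nu}$. The only immaterial difference is on the near region, where the paper first uses the monotonicity trick $\omega(f,s)\le\omega(f,s^{\nu})$ for $s\le 1$ before invoking property (c), while you apply property (c) directly and dominate the linear weight via the interpolation $a_{k}\le(w/2)^{1-\nu}a_{k}^{\nu}$; both yield the bound $\omega(f,w^{-\nu})\bigl(M_{0}(\chi)+M_{\nu}(\chi)\bigr)$ on that region, which is in fact slightly sharper than the stated constant $M_{\nu}(\chi)+2M_{0}(\chi)$.
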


\begin{proof}
Let $t\in\mathbb{R}^{+}$ be fixed. Then using the condition $\displaystyle\sum_{k= - \infty}^{+\infty}  \chi(e^{-k} t^w) =1,$ we obtain
\begin{eqnarray*}
 |(S_{w}^{\chi}f)(t) - f(t)| &=& \bigg| \sum_{k= - \infty}^{+\infty} \chi(e^{-k} t^{w})(f(e^{\frac{k}{w}}) - f(t)) \bigg| \\
&\leq& \left( \sum_{\big|k-w \log t\big|< \frac{w }{2}} +\sum_{\big|k- w\log t\big| \geq \frac{w}{2}} \right) \big| \chi(e^{-k} t^{w})\big|
|f(e^{\frac{k}{w}}) - f(t)| \\
&:=& I_{1}+I_{2}.
\end{eqnarray*}
Let $0<\nu<1.$ Then we have
$$\omega\left(f,\Big |\frac{k}{w} - \log t \Big|\right)\leq\omega\left(f,\Big |\frac{k}{w} - \log t \Big|^{\nu}\right). $$ Therefore, using the above inequality and the property $(c),$ we obtain
\begin{eqnarray*}
I_{1}&\leq&\sum_{\big|k-w \log t\big|< \frac{w}{2}}\big| \chi(e^{-k} t^{w})\big| \omega\left(f,\Big |\frac{k}{w} - \log t \Big|^{\nu}\right)\\
&\leq&\sum_{\big|k-w \log t\big|< \frac{w}{2}}\big| \chi(e^{-k} t^{w})\big| \left(1+w^{\nu}\Big |\frac{k}{w} - \log t \Big|^{\nu}\right)\omega(f,w^{-\nu})\\
&\leq&\sum_{\big|k-w \log t\big|< \frac{w}{2}}\big| \chi(e^{-k} t^{w})\big| \omega(f,w^{-\nu})
\\&&+\sum_{\big|k-w \log t\big|< \frac{w}{2}}\big| \chi(e^{-k} t^{w})\big| w^{\nu}\Big |\frac{k}{w} - \log t \Big|^{\nu}\omega(f,w^{-\nu})\\
&\leq&\omega(f,w^{-\nu})\sum_{\big|k-w \log t\big|< \frac{w}{2}}\big| \chi(e^{-k} t^{w})\big| +\omega(f,w^{-\nu})\sum_{\big|k-w \log t\big|< \frac{w}{2}}\big| \chi(e^{-k} t^{w})\big| \Big |k - w\log t \Big|^{\nu}.
\end{eqnarray*}
In view of the conditions $M_{0}(\chi)$ and $M_{\nu}(\chi),$ we easily obtain
$$I_{1}\leq \omega(f,w^{-\nu})[M_{0}(\chi)+M_{\nu}(\chi)].$$
Now we estimate $I_2.$ Since $\big|k- w\log t\big| \geq \frac{w}{2},$ we have
$$\frac{1}{\big|k- w\log t\big|^{\nu}}\leq 2^{\nu}w^{-\nu}, \,\,\ 0<\nu<1.$$
Hence, we obtain
\begin{eqnarray*}
I_{2} &\leq&2\|f\|_{\infty} \sum_{\big|k- w\log t\big| \geq \frac{w}{2}}  \big| \chi(e^{-k} t^{w})\big|\leq
2\|f\|_{\infty} \sum_{\big|k- w\log t\big| \geq \frac{w}{2}} \frac{\big|k- w\log t\big|^{\nu} }{\big|k- w\log t\big| ^{\nu}} \big| \chi(e^{-k} t^{w})\big|\\
&\leq&2^{\nu+1}\|f\|_{\infty}w^{-\nu}\sum_{\big|k- w\log t\big| \geq \frac{w}{2}} \big| \chi(e^{-k} t^{w})\big|\big|k- w\log t\big|^{\nu} \leq
2^{\nu+1}\|f\|_{\infty}w^{-\nu}M_{\nu}(\chi)<\infty.
\end{eqnarray*}
On combining the estimates $I_{1}$ and $I_{2},$ we get the desired estimate.
\end{proof}

\section{Round-Off and Time Jitter Errors}
This section is devoted to analyse round off and time jitter errors connected with exponential sampling series (\ref{classical}). The
round-off error arises when the exact sample values $ f( e^{\frac{k}{w}})$ are replaced by approximate close
ones $\bar{f}( e^{\frac{k}{w}})$ in the sampling series (\ref{classical}). Let $\xi_{k}=f( e^{\frac{k}{w}})-\bar{f}( e^{\frac{k}{w}})$ be uniformly bounded by $\xi,$ i.e., $\mid\xi_{k}\mid\leq\xi,$ for some $\xi>0.$ We are interested in analysing the error when $f(t)$ is approximated by the following exponential sampling series:
\begin{eqnarray*}
(S_w^{\chi}\bar{f})(t)=\sum_{k=- \infty}^{+\infty} \chi(e^{-k} t^{w}) \bar{f}( e^{\frac{k}{w}}).
\end{eqnarray*}
The total round-off or quantization error is defined by 
$$(Q_{\xi}f)(t):=\mid (S_w^{\chi}{f})(t)- (S_w^{\chi}\bar{f})(t)\mid.$$

\begin{thm}\label{t9}
For $f\in C(\mathbb{R}^{+}),$ the following hold: 
\begin{enumerate}
\item[(i)] $\parallel (Q_{\xi}f)\parallel_{ C(\mathbb{R}^{+})}\leq\xi M_{0}(\chi)$\\

\item[(ii)] $\parallel f- S_w^{\chi}\bar{f}\parallel_{ C(\mathbb{R}^{+})} \leq C\omega\left(f,\dfrac{1}{w}\right)+\xi M_{0}(\chi),$ 
where $C=M_{0}(\chi) +{ M_{1}(\chi)}.$
\end{enumerate}
\end{thm}

\begin{proof}
The error in the approximation can be splitted as
\begin{eqnarray*}
\mid f(t)- (S_w^{\chi}\bar{f})(t)\mid&\leq & \mid f(t)- (S_w^{\chi}f)(t)\mid+(Q_{\xi}f)(t):=I_1+(Q_{\xi}f)(t). 
\end{eqnarray*}
The term $I_1$ is the error arising if the actual sample value is used and the total round-off or quantization error can be evaluated by 
\begin{eqnarray*}
\parallel (Q_{\xi}f)\parallel_{ C(\mathbb{R}^{+})}&=&\displaystyle\sup_{t\in \mathbb{R}^{+}}{\left|\sum_{k=- \infty}^{+\infty} \chi(e^{-k} t^{w}) {f}( e^{\frac{k}{w}})-\sum_{k=- \infty}^{+\infty} \chi(e^{-k} t^{w}) \bar{f}( e^{\frac{k}{w}})\right|}\\
&=&\displaystyle\sup_{t\in \mathbb{R}^{+}}{\sum_{k=- \infty}^{+\infty}\bigg|\xi_{k}\chi(e^{-k} t^{w})\bigg|}\leq\xi M_{0}(\chi).
\end{eqnarray*}
In view of Theorem 4 (\cite{bardaro7}, page no.7), we have
$$I_1\leq M_{0}(\chi) \omega(f,\delta)+\frac{ \omega(f,\delta)}{w\delta}M_{1}(\chi).$$
On combining the estimates $I_1$ and $I_2,$ we get
\begin{eqnarray*}
\parallel f- S_w^{\chi}\bar{f}\parallel_{ C(\mathbb{R}^{+})} &\leq &\bigg(M_{0}(\chi) +\frac{ M_{1}(\chi)}{w\delta}\bigg)\omega(f,\delta)+\xi M_{0}(\chi).
\end{eqnarray*}
Choosing $\delta=\displaystyle\frac{1}{w},$ we obtain
\begin{eqnarray*}
\parallel f- S_w^{\chi}\bar{f}\parallel_{ C(\mathbb{R}^{+})} &\leq &C\omega(f,\frac{1}{w})+\xi M_{0}(\chi),
\end{eqnarray*}
where $C=M_{0}(\chi) +{ M_{1}(\chi)}.$ Hence, the proof is completed.
\end{proof}

The time-jitter error occurs when the function $f(t)$ being approximated from samples which are taken at perturbed nodes, i.e., the exact sample values $ f( e^{\frac{k}{w}})$ are replaced by $ f( e^{\frac{k}{w}}+\varrho_{k})$ in the sampling series (\ref{classical}). So we are interested in analysing time jitter error and the approximation behaviour when $f(t)$ is approximated by the sampling series 
$\displaystyle\sum_{k=- \infty}^{+\infty} \chi(e^{-k} t^{w}){f}( e^{\frac{k}{w}}+\varrho_{k}).$ We assume that the values $\varrho_{k}$ are bounded by a small number $\varrho,$ i.e., $\mid\varrho_{k}\mid\leq \varrho,$ for all $k\in\mathbb{Z}$ and for some $\varrho>0.$ The total time jitter error is defined by
  $$J_{\varrho}f(t):=\left|\sum_{k=- \infty}^{+\infty} \chi(e^{-k} t^{w}){f}( e^{\frac{k}{w}})-\sum_{k=- \infty}^{+\infty} \chi(e^{-k} t^{w}){f}( e^{\frac{k}{w}}+\varrho_{k})\right|.$$

\begin{thm}\label{t10}
For $f\in C^{(1)}(\mathbb{R}^{+}),$ the following hold: 
\begin{enumerate}
\item[(i)] $\|J_{\varrho} f\|_{ C(\mathbb{R}^{+})}\leq\varrho\, \parallel f^{'}\parallel_{ C(\mathbb{R}^{+})}M_{0}(\chi)
$\\

\item[(ii)] $\bigg\| f(.)-\sum_{k=- \infty}^{+\infty} \chi(e^{-k} (.)^{w}){f}( e^{\frac{k}{w}}+\varrho_{k})\bigg\|_{ C(\mathbb{R}^{+})}\leq \,\
C\omega\left(f,\dfrac{1}{w}\right)+\varrho\parallel f^{'}\parallel_{ C(\mathbb{R}^{+})}M_{0}(\chi),$ where $C=M_{0}(\chi) +{ M_{1}(\chi)}.$
\end{enumerate}
\end{thm}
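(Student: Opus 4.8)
The plan is to mirror the structure of the proof of Theorem \ref{t9}, with the single modification that the direct bound on the round-off perturbation is replaced by a mean value estimate exploiting the extra hypothesis $f\in C^{(1)}(\mathbb{R}^{+})$. For part (i), I would first rewrite the total time jitter error in the combined form
$$J_{\varrho}f(t)=\left|\sum_{k=-\infty}^{+\infty}\chi(e^{-k}t^{w})\big(f(e^{\frac{k}{w}})-f(e^{\frac{k}{w}}+\varrho_{k})\big)\right|.$$
The key step is to control each difference $f(e^{\frac{k}{w}})-f(e^{\frac{k}{w}}+\varrho_{k})$. Since $f$ is continuously differentiable and $\varrho$ is small enough that $e^{\frac{k}{w}}+\varrho_{k}>0$, the Lagrange mean value theorem gives $|f(e^{\frac{k}{w}})-f(e^{\frac{k}{w}}+\varrho_{k})|\leq\|f'\|_{C(\mathbb{R}^{+})}|\varrho_{k}|\leq\varrho\,\|f'\|_{C(\mathbb{R}^{+})}$, uniformly in $k$, using $|\varrho_{k}|\leq\varrho$. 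Pulling this uniform bound out of the sum and invoking the absolute-moment condition $\sum_{k}|\chi(e^{-k}t^{w})|\leq M_{0}(\chi)$ yields $J_{\varrho}f(t)\leq\varrho\,\|f'\|_{C(\mathbb{R}^{+})}M_{0}(\chi)$ for every $t$; taking the supremum over $t\in\mathbb{R}^{+}$ establishes (i).

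For part (ii), I would decompose the total error through the exact sampling series, exactly as in Theorem \ref{t9}:
$$\left|f(t)-\sum_{k=-\infty}^{+\infty}\chi(e^{-k}t^{w})f(e^{\frac{k}{w}}+\varrho_{k})\right|\leq|f(t)-(S_{w}^{\chi}f)(t)|+J_{\varrho}f(t).$$
The first term is the ordinary approximation error, which by Theorem 4 of \cite{bardaro7} is at most $\big(M_{0}(\chi)+\tfrac{M_{1}(\chi)}{w\delta}\big)\omega(f,\delta)$; choosing $\delta=\tfrac{1}{w}$ collapses this to $C\,\omega(f,\tfrac{1}{w})$ with $C=M_{0}(\chi)+M_{1}(\chi)$. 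The second term is bounded by part (i). Adding the two estimates and taking the supremum over $t\in\mathbb{R}^{+}$ gives (ii), completing the proof.

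I do not expect a genuine obstacle here; the only point that requires care is the passage from the round-off setting of Theorem \ref{t9} to the present jitter setting. There the perturbation $\xi_{k}$ was assumed directly bounded, whereas here the perturbation enters through the \emph{argument} of $f$, so the differentiability hypothesis $f\in C^{(1)}(\mathbb{R}^{+})$ is essential: it is precisely what converts the node perturbation $|\varrho_{k}|\leq\varrho$ into a value perturbation of magnitude $\varrho\,\|f'\|_{C(\mathbb{R}^{+})}$ through the mean value theorem. Once that translation is in place, the two proofs run in parallel and the remaining manipulations are routine.
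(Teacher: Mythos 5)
Your proposal is correct and follows essentially the same route as the paper's own proof: the mean value theorem converts the node perturbation $|\varrho_{k}|\leq\varrho$ into a value perturbation bounded by $\varrho\,\|f'\|_{C(\mathbb{R}^{+})}$, which combined with $M_{0}(\chi)$ gives (i), and (ii) is obtained by the same decomposition through $S_{w}^{\chi}f$ with Theorem 4 of \cite{bardaro7} and the choice $\delta=\tfrac{1}{w}$. Your explicit remark that one needs $e^{\frac{k}{w}}+\varrho_{k}>0$ for the mean value theorem to apply on $\mathbb{R}^{+}$ is a small point of care that the paper leaves implicit.
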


\begin{proof}
Applying the mean value theorem, error can be estimated by
\begin{eqnarray*}
   \|J_{\varrho} f\|_{ C(\mathbb{R}^{+})}&\leq&\displaystyle\sup_{k\in \mathbb{Z}}\{\sup_{t\in \mathbb{R}^{+}}|{f}( e^{\frac{k}{w}})-{f}( e^{\frac{k}{w}}+\varrho_{k})|\}\sup_{t\in \mathbb{R}^{+}}{\sum_{k=- \infty}^{+\infty}| \chi(e^{-k} t^{w})|}\\
   &\leq& \mid\varrho_{k}\parallel f^{'}\parallel_{ C(\mathbb{R}^{+})}\mid\sup_{t\in \mathbb{R}^{+}}{\sum_{k=- \infty}^{+\infty}| \chi(e^{-k} t^{w})|}
 \leq \varrho\, \parallel f^{'}\parallel_{ C(\mathbb{R}^{+})}M_{0}(\chi).
\end{eqnarray*}
From the above estimates it is clear that the jitter error essentially depends on the smoothness of function $f.$ For $f\in C^{(1)}(\mathbb{R}^{+}),$ the associated approximation error is estimated by
\begin{eqnarray*}
\bigg| f(t)-\sum_{k=- \infty}^{+\infty} \chi(e^{-k}t^{w}){f}( e^{\frac{k}{w}}+\varrho_{k})\bigg|&\leq&
\bigg| f(t)-\sum_{k=- \infty}^{+\infty} \chi(e^{-k} t^{w}){f}( e^{\frac{k}{w}})\bigg|\\&&+\bigg| \sum_{k=- \infty}^{+\infty} \chi(e^{-k} t^{w}){f}( e^{\frac{k}{w}})-\sum_{k=- \infty}^{+\infty} \chi(e^{-k} t^{w}){f}( e^{\frac{k}{w}}+\varrho_{k})\bigg|\\
&\leq& |f(t)-S_w^{\chi}{f}(t)|+J_{\varrho}f(t).
\end{eqnarray*}
Again using Theorem 4 (\cite{bardaro7}, page no.7), we have
$$|f(t)-S_w^{\chi}{f}(t)|\leq M_{0}(\chi) \omega(f,\delta)+\frac{ \omega(f,\delta)}{w\delta}M_{1}(\chi).$$
Using the above estimate and $J_{\varrho} f,$ we obtain
\begin{eqnarray*}
\bigg\| f(.)-\sum_{k=- \infty}^{+\infty} \chi(e^{-k} (.)^{w}){f}( e^{\frac{k}{w}}+\varrho_{k})\bigg\|_{ C(\mathbb{R}^{+})}\leq M_{0}(\chi) \omega(f,\delta)+\frac{ \omega(f,\delta)}{w\delta}M_{1}(\chi)+\varrho\parallel f^{'}\parallel_{ C(\mathbb{R}^{+})}M_{0}(\chi).
\end{eqnarray*}
Hence, the proof is completed.
\end{proof}

\section{Examples of the Kernels} 

In this section, we provide certain examples of the kernel functions which will satisfy our assumptions. 
First we give the family of Mellin-B spline kernels. The Mellin B-spline of order $n$ is given by
$$\bar{B}_{n}(x):= \frac{1}{(n-1)!} \sum_{j=0}^{n} (-1)^{j} {n \choose j} \bigg( \frac{n}{2}+\log x-j \bigg)_{+}^{n+1}, \,\  x \in \mathbb{R}^{+}$$
It can be easily seen that $\bar{B}_{n}(x) $ is compactly supported for every $ n \in \mathbb{N}.$ The Mellin transform of $\bar{B}_{n}$ (see \cite{bardaro7}) is
\begin{eqnarray*}
\widehat{[\bar{B}_{n}]_{M}}(c+it) = \bigg( \frac{\sin(\frac{t}{2})}{(\frac{t}{2})} \Bigg)^{n},  \ \ \hspace{0.5cm} t \neq 0.
\end{eqnarray*}
The Mellin's-Poisson summation formula \cite{butzer4}) is defined by
$$ (i)^{j} \sum_{k= - \infty}^{+\infty} \chi(e^{k} x) ( k-\log u)^{j} = \sum_{k= - \infty}^{+\infty}\frac{d^{j}}{dt^{j}}\widehat{[\chi]_{M}}(2k \pi i) \ x^{-2 k \pi i}, \ \ \ \ \ \ \mbox{for} \ k \in \mathbb{Z}.$$
We need the following lemma (see \cite{bardaro7}).
\begin{lemma}\label{l1}
The condition $\displaystyle \sum_{k= - \infty}^{+\infty}  \chi(e^{-k} x^{w})=1$ is equivalent to
\begin{eqnarray*}
\widehat{[\chi]_{M}}(2k \pi i)= \left\{
\begin{array}{ll}
1 , &\mbox{if} \ k=0 \\
0, & \mbox{otherwise.}
\end{array}
\right.
\end{eqnarray*}
Moreover $ m_{j}(\chi,u)=0$ for $ j= 1,2,\cdots, n$ is equivalent to $\dfrac{d^{j}}{dt^{j}}\widehat{[\chi]_{M}}(2k \pi i)= 0$ for $ j= 1,2,\cdots,n$ and $ \forall \ k \in \mathbb{Z}.$
\end{lemma}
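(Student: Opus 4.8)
The plan is to derive both equivalences directly from the Mellin--Poisson summation formula recalled above, treating each side as the Fourier expansion of a log-periodic function and invoking uniqueness of Fourier coefficients. First I would observe that the parameter $w$ plays no role: the substitution $y=x^{w}$ turns $\sum_{k}\chi(e^{-k}x^{w})$ into $\sum_{k}\chi(e^{-k}y)$, so it suffices to treat the condition $\sum_{k=-\infty}^{+\infty}\chi(e^{-k}y)=1$ for all $y\in\mathbb{R}^{+}$. Writing $y=e^{v}$, the function $g(v):=\sum_{k}\chi(e^{v-k})$ is periodic of period $1$ in $v$, so comparing it with a genuine periodic function is legitimate.

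For the first equivalence I would apply the Mellin--Poisson summation formula with $j=0$, which gives
\begin{eqnarray*}
\sum_{k=-\infty}^{+\infty}\chi(e^{-k}y)&=&\sum_{k=-\infty}^{+\infty}\widehat{[\chi]_{M}}(2k\pi i)\,y^{-2k\pi i}.
\end{eqnarray*}
The right-hand side is precisely the Fourier series of the left-hand (log-)periodic function, the numbers $\widehat{[\chi]_{M}}(2k\pi i)$ being its Fourier coefficients. Hence the left-hand side is identically equal to the constant $1$ if and only if this Fourier series is that of the constant function $1$; by uniqueness of Fourier coefficients this forces $\widehat{[\chi]_{M}}(0)=1$ and $\widehat{[\chi]_{M}}(2k\pi i)=0$ for every $k\neq 0$, and conversely. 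This is exactly the first asserted equivalence.

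For the moment conditions I would run the same argument with general $j$. The Mellin--Poisson formula of order $j$ reads
\begin{eqnarray*}
(i)^{j}\,m_{j}(\chi,y)&=&(i)^{j}\sum_{k=-\infty}^{+\infty}\chi(e^{-k}y)(k-\log y)^{j}\\
&=&\sum_{k=-\infty}^{+\infty}\frac{d^{j}}{dt^{j}}\widehat{[\chi]_{M}}(2k\pi i)\,y^{-2k\pi i},
\end{eqnarray*}
so that, up to the nonzero factor $(i)^{j}$, the $j$-th algebraic moment $m_{j}(\chi,y)$ is the log-periodic function whose Fourier coefficients are the derivatives $\tfrac{d^{j}}{dt^{j}}\widehat{[\chi]_{M}}(2k\pi i)$. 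Thus $m_{j}(\chi,y)=0$ for all $y$ if and only if every one of these coefficients vanishes, and requiring this for $j=1,\dots,n$ gives the stated equivalence for the moments.

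The main thing to justify carefully is the passage from \emph{the periodized sum is constant (or zero)} to \emph{its Fourier coefficients vanish}. This needs the Mellin--Poisson summation formula to be valid for $\chi$ (absolute convergence of the series together with sufficient decay of $\widehat{[\chi]_{M}}$ along the line $\mathrm{Re}(s)=0$, which the defining hypotheses of $\Phi$ and the moment bounds $M_{\nu}(\chi)<\infty$ supply), and the uniqueness theorem for Fourier coefficients of a continuous periodic function. Granting these standard facts, as in \cite{bardaro7}, both equivalences follow immediately; the argument is otherwise purely formal.
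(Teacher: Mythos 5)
Your proof is correct and takes essentially the approach the paper intends: the paper states this lemma without proof, citing \cite{bardaro7}, immediately after displaying the Mellin--Poisson summation formula, and the standard argument there is precisely your reduction to a Fourier series in $v=\log y$ (noting $w$ is immaterial) together with uniqueness of Fourier coefficients, applied with $j=0$ for the partition-of-unity condition and with $j=1,\dots,n$ for the moment conditions. The technical caveats you flag --- validity of the Poisson formula for $\chi$ and uniqueness for the periodized sum --- are the right ones, and they hold for the continuous, compactly supported kernels (Mellin B-splines and their combinations) to which the paper applies the lemma.
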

Using the above Lemma, we obtain
\begin{eqnarray*}
\widehat{[\bar{B}_{n}]_{M}}(2k \pi i)= \left\{
\begin{array}{ll}
1 , &\mbox{if} \ k=0 \\
0, & \mbox{otherwise.}
\end{array}
\right.
\end{eqnarray*}
Using Mellin's-Poisson summation formula, it is easy to see that $\bar{B}_{n}(x)$ satisfies the condition (i). As $ \bar{B}_{n}(x)$ is compactly supported, the condition (ii) is also satisfied. Next we consider the  Mellin Jackson kernels. For $x\in\mathbb{R}^{+}, \beta\in\mathbb{N}, \gamma\geq 1,$ the Mellin Jackson kernels are defined by 
$$J_{\gamma, \beta}^{-1}(x):=d_{\gamma, \beta}x^{-c} sinc^{2\beta}\left(\frac{\log x}{2\gamma\beta\pi}\right),$$
where 
$$d_{\gamma, \beta}^{-1}:=\int_{0}^{\infty}sinc^{2\beta}\left(\frac{\log x}{2\gamma\beta\pi}\right)\frac{du}{u}.$$ One can easily verify that the Mellin Jackson kernels also satisfies conditions (i) and (ii) (see \cite{bardaro7}).  We can analyse the convergence of the exponential sampling series with jump discontinuity associated with these kernels only for the case given in Theorem \ref{t2} and we observe that $\chi(1)\neq 0$. So Theorem \ref{t5} can not be applied for these kernels. In order to obtain the convergence of the exponential sampling series at jump discontinuity $t\in\mathbb{R}^{+}$ of the given bounded signal $f:\mathbb{R}^{+}\rightarrow \mathbb{R}$, we need to construct suitable kernels. One such construction is given in the following theorem.

\begin{thm}\label{t11}
Let $\chi_{a},\chi_{b}$ be two continuous kernels supported respectively in the intervals ${[e^{-a},e^{a}]}$ and $ {[e^{-b},e^{b}]}.$ Let $\alpha\in\mathbb{R}$ be fixed. 
We define $\chi:\mathbb{R}^{+}\rightarrow \mathbb{R}^{+}$ by
$$\chi{(u)}:=(1-\alpha)\chi_{a}(2ue^{-a-1})+\alpha\chi_{b}(2ue^{b}), \,\,\ u\in\mathbb{R}^{+}.$$
Then $\chi$ is a kernel satisfying conditions (i), (ii) and $\chi(1)=0.$ Moreover, the corresponding exponential sampling series $S_w^{\chi}{f}, w>0$ based upon $\chi$ satisfy (i) of Theorem \ref{t5} with parameter $\alpha$ for a given bounded signal $f:\mathbb{R}^{+}\rightarrow \mathbb{R}$ at any non-removable discontinuity $t\in\mathbb{R}^{+}$ of $f.$ 
\end{thm}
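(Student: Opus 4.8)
The plan is to verify the three claimed properties of the constructed kernel $\chi$ in turn, and then invoke the equivalences already established in Theorem \ref{t5}. Since $\chi$ is built as a convex-type combination of two shifted-and-scaled copies of the given kernels $\chi_a$ and $\chi_b$, the strategy is to track how the support, the partition-of-unity condition, and the value at $u=1$ transform under the substitutions $u\mapsto 2ue^{-a-1}$ and $u\mapsto 2ue^{b}$. First I would record the two affine (in the logarithmic variable) changes of variable explicitly: writing $v=2ue^{-a-1}$ and $v'=2ue^{b}$, I would determine the $u$-supports of the two summands. Since $\chi_a$ is supported in $[e^{-a},e^{a}]$, the first term is supported where $2ue^{-a-1}\in[e^{-a},e^{a}]$, i.e.\ $u\in[\tfrac12 e^{1},\tfrac12 e^{2a+1}]$; similarly the $\chi_b$ term is supported in $[\tfrac12 e^{-b},\tfrac12 e^{b}]$ (after the $e^{b}$ shift). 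The point of these particular shifts is to push one kernel's support strictly to the left of $u=1$ and the other strictly to the right, so that the two supports are disjoint and, crucially, neither contains $1$.

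The key step is then the verification of $\chi(1)=0$: I would substitute $u=1$ and observe that $\chi_a(2e^{-a-1})=0$ and $\chi_b(2e^{b})=0$ because in each case the argument falls outside the respective support interval (the scaling constant $2$ and the exponential shifts are chosen precisely to guarantee $2e^{-a-1}<e^{-a}$ and $2e^{b}>e^{b}$, hence both arguments lie outside $[e^{-a},e^{a}]$ and $[e^{-b},e^{b}]$ respectively). This immediately gives $\chi(1)=0$. For condition (i), the partition of unity, I would use that each $\chi_a,\chi_b$ individually satisfies $\sum_{k}\chi_a(e^{-k}x)=1$ and $\sum_k\chi_b(e^{-k}x)=1$; since the discrete sum $\sum_k \chi(e^{-k}u)$ is linear in $\chi$ and invariant under the continuous dilation built into the arguments (the shifts $e^{-a-1}$ and $e^{b}$ merely relabel the summation variable by an integer-independent factor), the sum becomes $(1-\alpha)\cdot 1+\alpha\cdot 1=1$. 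Condition (ii), finiteness of some absolute moment, follows because continuity plus compact support of $\chi_a$ and $\chi_b$ forces each summand, hence $\chi$, to be compactly supported, so $M_\nu(\chi,u)$ is a finite sum uniformly in $u$.

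Having established that $\chi\in\Phi$ with $\chi(1)=0$ and that $\chi$ is continuous, I am in the setting of Theorem \ref{t5}, so it suffices to verify statement (ii) or equivalently (iv) there, namely $\psi_{\chi}^{-}(u)=\alpha$ on $[1,e)$. I would compute $\psi_{\chi}^{-}$ directly from $\chi_0$, the restriction of $\chi$ to $(0,1)$. Because the support of the $\chi_b$-term lies entirely in $(0,1)$ while the $\chi_a$-term lies entirely in $(1,\infty)$, the truncation that defines $\psi_{\chi}^{-}$ picks out exactly the $\alpha\chi_b$ contribution; summing over the integer translates and using that $\chi_b$ is itself a partition-of-unity kernel collapses $\sum_{k}\alpha\chi_b(ue^{-k}e^{b})=\alpha$ for every $u$. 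Equivalently, at the Mellin level, the zeroth coefficient $\int_0^1\chi(u)\,\tfrac{du}{u}$ equals $\alpha\int_0^\infty\chi_b(\cdot)\tfrac{du}{u}=\alpha$ and the nonzero-order coefficients vanish by the summation condition on $\chi_b$. This yields statement (ii) of Theorem \ref{t5}, whence statement (i) holds with parameter $\alpha$, as claimed.

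I expect the main obstacle to be the bookkeeping in the support computation and the sign/placement of the exponential shifts: one must check carefully that the chosen constants really do force $2e^{-a-1}$ to fall to the left of $[e^{-a},e^{a}]$ and $2e^{b}$ to the right of $[e^{-b},e^{b}]$, and that the two transformed supports are not merely disjoint but separated by the point $u=1$ — this separation is exactly what makes $\psi_{\chi}^{-}$ pick out only the $\alpha$-weighted piece and makes $\chi(1)=0$. A secondary subtlety is confirming that the stated dilation arguments leave the discrete partition-of-unity sum invariant; this is routine once one notes the sum is over integer shifts of $\log$ and a fixed real translation does not alter the value of a $1$-periodic (in $\log$) sum, but it deserves an explicit line.
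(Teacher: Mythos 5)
Your proposal is correct in substance and follows the same overall skeleton as the paper (verify that $\chi\in\Phi$, that $\chi(1)=0$, then invoke Theorem \ref{t5}), but your verification of the individual conditions is genuinely more elementary. The paper works throughout at the Mellin-transform level: it computes $\widehat{[\chi]_{M}}(s)=(1-\alpha)\widehat{[\chi_{a}]_{M}}(s)\left(\tfrac{e^{1+a}}{2}\right)^{s}+\alpha\widehat{[\chi_{b}]_{M}}(s)\left(\tfrac{e^{-b}}{2}\right)^{s}$, deduces condition (i) via Lemma \ref{l1} (Mellin--Poisson), and then verifies statement (iv) of Theorem \ref{t5} by evaluating $\int_{0}^{1}\chi(u)u^{2k\pi i-1}\,du=\alpha\,\widehat{[\chi_{b}]_{M}}(2k\pi i)\left(2e^{b}\right)^{-2k\pi i}$. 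You instead verify condition (i) pointwise, using that the partition-of-unity identity for $\chi_{a}$ and $\chi_{b}$ holds at every point, so $\sum_{k}\chi(e^{-k}u)=(1-\alpha)\sum_{k}\chi_{a}\bigl(e^{-k}(2ue^{-a-1})\bigr)+\alpha\sum_{k}\chi_{b}\bigl(e^{-k}(2ue^{b})\bigr)=(1-\alpha)+\alpha=1$, and you establish statement (ii) of Theorem \ref{t5} directly by computing $\psi_{\chi}^{-}$ from the support separation, rather than its Mellin equivalent (iv). Your route buys something concrete: it makes plain that the construction works for \emph{all} $a,b>0$ (the paper's phrase ``for suitable choices of $a$ and $b$'' is in fact unnecessary, since the dilation factors $(\cdot)^{2k\pi i}$ multiply transforms that vanish for $k\neq 0$ and equal $1$ at $k=0$), and it avoids invoking Mellin--Poisson and the continuity hypothesis needed for the (ii)$\Leftrightarrow$(iv) step.

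One algebra slip needs repair: your stated support for the $\chi_{b}$-summand is wrong. From $2ue^{b}\in[e^{-b},e^{b}]$ one gets $u\in\bigl[\tfrac12 e^{-2b},\tfrac12\bigr]$, not $\bigl[\tfrac12 e^{-b},\tfrac12 e^{b}\bigr]$; as written, your interval contains $1$ whenever $b>\log 2$, which would contradict the separation-by-$u=1$ claim on which your entire argument rests (both $\chi(1)=0$ and the identification of $\psi_{\chi}^{-}$ with the $\alpha$-weighted piece). The correct interval lies in $\bigl(0,\tfrac12\bigr]$, strictly left of $1$, so the conclusion survives --- but only after the computation is fixed. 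A second, harmless slip: in the collapsing sum the argument should be $\chi_{b}(2ue^{-k}e^{b})$, carrying the factor $2$; this does not affect the outcome since the partition of unity holds at every point, but it should be stated consistently. With these corrections your proof is complete and valid.
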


\begin{proof}
The Mellin transform of $\chi(u)$ is 
\begin{eqnarray*}
\widehat{[\chi]_{M}}(s)
&=&\int_{0}^{\infty}(1-\alpha)\chi_{a}(2te^{-a-1})t^{s-1}dt+\int_{0}^{\infty}\alpha\chi_{b}(2te^{b})t^{s-1}dt\\
&=&(1-\alpha)\widehat{[\chi_{a}]_{M}}(s)\left(\frac{e^{(1+a)}}{2}\right)^{s}+\alpha\widehat{[\chi_{b}]_{M}}(s) \left(\frac{e^{-b}}{2}\right)^{s}.
\end{eqnarray*}
 It is simple to check that $\chi$ satisfies condition (ii). Now we show that kernel satisfies the condition (i). We obtain 
\begin{eqnarray*}
\widehat{[\chi]_{M}}(2k\pi i )&=&(1-\alpha)\widehat{[\chi_{a}]_{M}} (2k\pi i)\left(\frac{e^{(1+a)}}{2}\right)^{2k\pi i}+\alpha\widehat{[\chi_{b}]_{M}}(2k\pi i)\left(\frac{e^{-b}}{2}\right)^{2k\pi i}.
\end{eqnarray*}
As $\chi_{a}$ and $\chi_{b}$ satisfies condition (i), we have 
\begin{eqnarray*}
\widehat{[\chi_{a}]_{M}} (2k\pi i )= \widehat{[\chi_{b}]_{M}}(2k\pi i )=\left\{
\begin{array}{ll}
0 , &  \ \ \ \mbox{if} \ \ \ k \neq 0 \\
1, & \ \ \ \mbox{if} \ \ \ k= 0
\end{array}
\right.
\end{eqnarray*}
For suitable choices of $a$ and $b$, we obtain 
\begin{eqnarray*}
\widehat{[\chi]_{M}} (2k\pi i )= \left\{
\begin{array}{ll}
0 , &  \ \ \ \mbox{ if} \ \ \ k \neq 0 \\
1, & \ \ \ \mbox{if} \ \ \ k= 0.
\end{array}
\right.
\end{eqnarray*}
Therefore, $\chi$ satisfies condition (i) and we can easily see that $ \chi(1)=0.$ Now, we obtain 
\begin{eqnarray*}
\int_{0}^{1} \chi (u) u^{2k\pi i -1}du&=&\alpha\int_{0}^{1} \chi_{b}(2e^{b}u)u^{2k\pi i-1}du\\
&=& \alpha\widehat{[\chi_{b}]_{M}}(2k\pi i)\left(\frac{e^{-b2k\pi i}}{2^{2k\pi i}}\right)\\
&=&\left\{
\begin{array}{ll}
0 , &  \ \ \ \mbox{if} \ \ \ k \neq 0 \\
\alpha, & \ \ \ \mbox{if} \ \ \ k= 0.
\end{array}
\right.
\end{eqnarray*}
Therefore, the condition (iv) of Theorem \ref{t5} is satisfied, hence the proof is completed. 
\end{proof}

Now we test numerically the approximation of discontinuous function
\begin{eqnarray*}
f(t)&=&\left\{
\begin{array}{ll}
\dfrac{11}{2t^2+1} , &  \ \ \ \ \ t<\dfrac{3}{2} \\
\vspace{0.25 cm}
3, & \ \ \  \ \ \ \dfrac{3}{2}\leq t<\dfrac{7}{2} \\
\vspace{0.25 cm}

2, & \ \ \  \ \ \ \dfrac{7}{2}\leq t<\dfrac{11}{2} \\
\vspace{0.25 cm}

\dfrac{12}{1+2t}, & \ \ \  \ \ \ t\geq \dfrac{11}{2} 
\end{array}
\right.
\end{eqnarray*}
by exponential sampling series at its jump discontinuities $t=\dfrac{3}{2},$ $t=\dfrac{7}{2}$ and $t=\dfrac{11}{2}.$ We consider a linear combination of Mellin B-spline kernels defined by (see Fig. 1)
\begin{eqnarray*}
\chi(t)=\dfrac{1}{4}\bar{B}_{2}(2te^{-2})+\dfrac{3}{4}\bar{B}_{2}(2te),
\end{eqnarray*}
where $\bar{B}_{2}$ is given by 
\begin{eqnarray*}
\bar{B}_{2}(t)&=&\left\{
\begin{array}{ll}
1-\log t, &  \ \ \ \ \ 1<t<e \\
\vspace{0.25 cm}
1+\log t, & \ \ \  \ \ \ \dfrac{1}{e}<t<1
\\
\vspace{0.25 cm}
0, & \ \ \  \ \ \ \mbox{otherwise.}
\end{array}
\right.
\end{eqnarray*}

\begin{figure}[h]
\centering
{\includegraphics[width=0.8\textwidth]{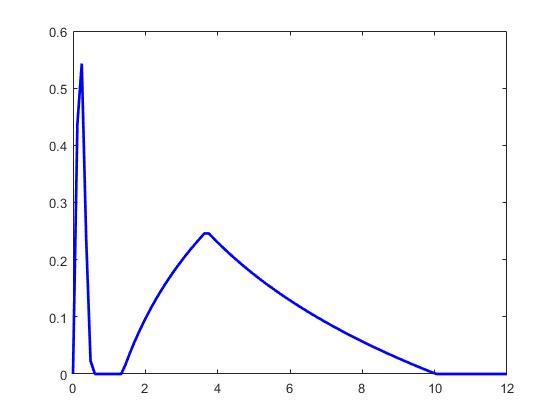}}
\caption{Plot of the kernel $\chi(t)=\dfrac{1}{4}\bar{B}_{2}(2te^{-2})+\dfrac{3}{4}\bar{B}_{2}(2te).$}
\end{figure}
\noindent Clearly the exponential sampling series $S_w^{\chi}{f}$ based on $\chi(t)$ satisfies the conditions (i), (ii) and $\chi(1)=0.$ We also observe that the condition (i) of Theorem \ref{t5} is satisfied with $\alpha=\dfrac{3}{4}.$ From Theorem \ref{t5} and Theorem \ref{t11}, we have that at the discontinuity points of $f,$ the sampling series $S_w^{\chi}{f}$ converges to $\dfrac{3}{4}f(t+0)+\dfrac{1}{4}f(t-0).$ The convergence of the sampling series $S_w^{\chi}{f}$ at discontinuity points $t=\dfrac{3}{2},$ $t=\dfrac{7}{2}$ and $t=\dfrac{11}{2}$ of the function $f$ has been tested and numerical results are presented in Tables 1, 2 and 3. 
\begin{figure}[h]
\centering
{\includegraphics[width=0.8\textwidth]{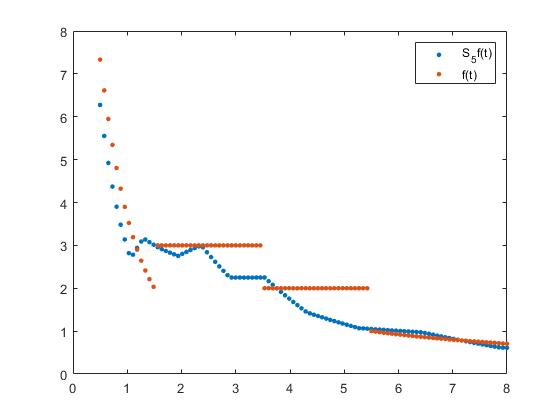}}
\caption{Approximation of $f(t)$ by $S_w^{\chi}{f}$ based on $\chi(t)=\dfrac{1}{4}\bar{B}_{2}(2te^{-2})+\dfrac{3}{4}\bar{B}_{2}(2te)$ for $w=5.$}
\end{figure}

\begin{figure}[h]
\centering
{\includegraphics[width=0.8\textwidth]{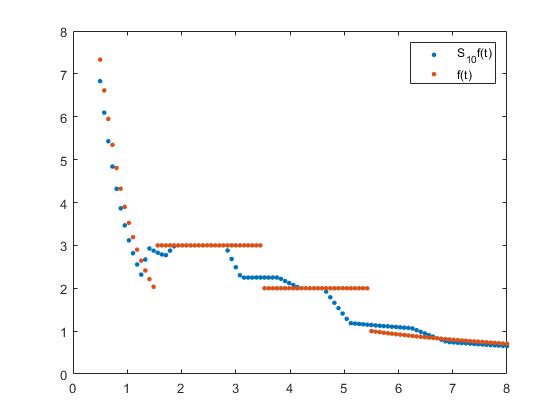}}
\caption{Approximation of $f(t)$ by $S_w^{\chi}{f}$ based on $\chi(t)=\dfrac{1}{4}\bar{B}_{2}(2te^{-2})+\dfrac{3}{4}\bar{B}_{2}(2te)$ for $w=10.$}
\end{figure}

\begin{tb}\label{table1}
 {\it Approximation of $f$ at the jump discontinuity point $t=\dfrac{3}{2}$ by the exponential sampling series $S_w^{\chi}{f}$ based on $\chi(t)$ for different values of $w>0.$ The theoretical limit of 
 $(S_w^{\chi}{f})\left(\dfrac{3}{2}\right)$ as $w\rightarrow \infty$ is
 $\dfrac{3}{4}f\left(\dfrac{3}{2}+0\right)+\dfrac{1}{4}f\left(\dfrac{3}{2}-0\right)=2.75.$}

$  $
\begin{center}
\begin{tabular}{|l|l|l|l|l|l|l|}\hline
 $w$&$ 5$& $10$& $20$& $50$ & $100$& $200$\\
 \hline
 $S_w^{\chi}{f}$&$3.0036$ & $2.8669$ & $2.8059$ & $2.7717$ & $2.7608$ & $2.7554$\\
  \hline

             \end{tabular}
             \end{center}
   \end{tb}

\begin{tb}\label{table2}
 {\it Approximation of $f$ at the jump discontinuity point $t=\dfrac{7}{2}$ by the exponential sampling series $S_w^{\chi}{f}$ based on $\chi(t)$ for different values of $w>0.$ The theoretical limit of 
 $(S_w^{\chi}{f})\left(\dfrac{7}{2}\right)$ as $w\rightarrow \infty$ is 
 $\dfrac{3}{4}f\left(\dfrac{7}{2}+0\right)+\dfrac{1}{4}f\left(\dfrac{7}{2}-0\right)=2.25.$}

$  $
\begin{center}
\begin{tabular}{|l|l|l|l|l|l|l|}\hline
 $w$&$ 5$& $10$& $20$& $50$ & $100$& $200$\\
 \hline
 $S_w^{\chi}{f}$&$2.25$ & $2.25$ & $2.25$ & $2.25$ & $2.25$ & $2.25$\\
  \hline

             \end{tabular}
             \end{center}
   \end{tb}

\begin{tb}\label{table3}
 {\it Approximation of $f$ at the jump discontinuity point $t=\dfrac{11}{2}$ by the exponential sampling series $S_w^{\chi}{f}$ based on $\chi(t)$ for different values of $w>0.$ The theoretical limit of 
 $(S_w^{\chi}{f})\left(\dfrac{11}{2}\right)$ as $w\rightarrow \infty$ is 
 $\dfrac{3}{4}f\left(\dfrac{11}{2}+0\right)+\dfrac{1}{4}f\left(\dfrac{11}{2}-0\right)=1.25.$}

$  $
\begin{center}
\begin{tabular}{|l|l|l|l|l|l|l|}\hline
 $w$&$ 5$& $10$& $20$& $50$ & $100$& $200$\\
 \hline
 $S_w^{\chi}{f}$&$1.0492$ & $1.1420$ & $1.1939$ & $1.2271$ & $1.2384$ & $1.2442$\\
  \hline

             \end{tabular}
             \end{center}
   \end{tb}

\clearpage

{\bf Acknowledgments.} The first two authors are supported by DST-SERB, India Research Grant EEQ/2017/000201. The third author P. Devaraj has been supported by DST-SERB Research Grant MTR/2018/000559.

\end{document}